\begin{document}

\title[Monotonicity, Topology, and Convexity of Recurrence in Random Walks]{Monotonicity, Topology, and Convexity of Recurrence in Random Walks}

\author[Rupert Li]{Rupert Li}
\address[]{Stanford University, Stanford, CA 94305, USA}
\email{rupertli@stanford.edu}

\author[Elchanan Mossel]{Elchanan Mossel}
\address[]{Massachusetts Institute of Technology, Cambridge, MA 02139, USA}
\email{elmos@mit.edu}

\author[Benjamin Weiss]{Benjamin Weiss}
\address[]{Hebrew University of Jerusalem, Jerusalem, Israel} \email{weiss@math.huji.ac.il}

\begin{abstract}
We consider non-homogeneous random walks on the two-dimensional positive quadrant $\N^2$ and the one-dimensional slab $\{0,1,\dots,k\}\times\N$.
In the 1960's the following question was asked for $\N^2$: is it true if such a random walk $X$ is recurrent and $Y$ is another random walk that at every point is more likely to go down and more likely to go left than $Y$, then $Y$ is also recurrent? 

We provide an example showing that the answer is negative.
We also show, via a coupling argument, that if either the random walk $X$ or $Y$ is sufficiently homogeneous then the answer is in fact positive.
In addition, we show using the Rayleigh monotonicity principle that the analogous question for random walks on trees is positive.

These results show that the subset of parameter space that yields recurrent random walks possesses some geometric properties, in this case the structure of an order ideal.
Motivated by this perspective, we consider the more symmetric setting of homogeneous random walks on finitely generated abelian groups, and ask when this subset possesses other geometric properties, namely various topological properties and convexity.
We answer some of these questions: in particular, we show that this subset is closed, and under a symmetric support condition, show it is path-connected and additionally show it is convex if and only if its effective dimension is at most 2.
We also show its complement is in some sense typically path-connected but not convex.
We finally propose some related open problems.  
\end{abstract}

\maketitle

\section{Introduction}\label{section:introduction}
For the non-homogeneous reflecting random walk on $\N$ with transitions $p_n$ from $n$ to $n+1$ and 
$q_n = 1-p_n$ from $n$ to $n-1$ (for $n>0$) there is an explicit characterization of when the 
random walk is recurrent/transient (see \cite[Section XV.8]{Feller}). From this it follows directly 
that if the random walk is recurrent and we consider a different one in which for all $n$ the $p_n'$ 
satisfy $p_n' \leq p_n$ then the new random walk is also recurrent, while if the first was transient 
while the second satisfies $p_n' \geq p_n$ for all $n$ then the second is also transient. These 
monotonicity properties can also be easily seen via a coupling argument.
  
In the early 1960's the question as to whether or not a similar monotonicity property holds for 
two-dimensional random walks circulated among the probability group at Princeton University.
We are not sure if this is the first time this question was asked.
To the best of our knowledge this question has not been resolved to date, though it is not hard to derive the answer using a 1972 result by Malyshev \cite{MalyshevRussian}; see \cref{remark:Malyshev}.
The purpose of the following note is to resolve this question and related questions.
    
We consider non-homogeneous random walks on the positive quadrant in two dimensions. Such a random walk is a Markov chain where the state space is $\N^2$ and one may only move to one of the four states immediately left, right, up, or down from the current state. 
We refer readers to \cite{Spitzer} for a broad overview of random walks.

Our main interest is in understanding if recurrence is a monotone property with respect to the following partial order: 
\begin{definition}\label{definition:N2_partial_order}
For two random walks $X$ and $Y$ on $\N^2$, say $X \preceq Y$ if for each state, the probability of going down in $X$ is greater than or equal to that in $Y$, the probability of going left in $X$ is greater than or equal to that in $Y$, the probability of going up in $Y$ is greater than or equal to that in $X$, and the probability of going right in $Y$ is greater than or equal to that in $X$.
\end{definition}
Intuitively, $X\preceq Y$ means $X$ goes down and left more than $Y$.

The main question we address in the paper is the following:
\begin{question} \label{q:main}
Let $X$ and $Y$ be two non-homogeneous irreducible random walks on $\N^2$ with $X\preceq Y$. Is it true that if $Y$ is recurrent then so is $X$? 
In other words is it true that if $X$ is transient then so is $Y$?
\end{question}

Note that for two irreducible Markov chains $X$ and $Y$ on the same state space, if there only exist finitely many states for which the outgoing transition probabilities are different for $X$ and $Y$, i.e., $X$ and $Y$ agree on a co-finite set of states, then they will have the same recurrence behavior: $X$ is recurrent, positive recurrent, or transient if and only if $Y$ is recurrent, positive recurrent, or transient, respectively.
Thus, when considering our question of monotonicity of recurrence, we can generalize Definition~\ref{definition:N2_partial_order}, as well as all other definitions that follow in this paper, so that the required inequalities only need to hold on a co-finite set of states.
For example, we can ignore any inequalities concerning the transition probabilities from the origin.
However, for sake of clarity and brevity, we henceforth do not discuss this equivalence up to a co-finite set.

The intuition behind \cref{q:main} is that if $Y$ is recurrent then it returns to the origin with probability $1$.
As $X \preceq Y$ it is natural to expect that $X$ will also return to the origin with probability $1$, as indeed is the case in nearest-neighbor walks on $\N$ as previously discussed.

In our main result we prove that, in fact, the answer is negative.

\begin{theorem} \label{thm:ex}
There exists two non-homogeneous random walks $X\preceq Y$ such that $Y$ is positive recurrent and $X$ is transient. Moreover $X$ and $Y$ are elliptic, meaning all possible transitions have positive probabilities. 
\end{theorem}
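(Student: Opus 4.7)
The plan is to exhibit two specific non-homogeneous elliptic walks $X$ and $Y$ by defining their transition probabilities piecewise across regions of $\mathbb{N}^2$. Since the analogous statement holds on $\mathbb{N}$, and (as the paper later proves) on $\mathbb{N}^2$ when one walk is sufficiently homogeneous, the counterexample must use both non-homogeneity and the two-dimensional geometry in an essential way. The high-level idea will be to shepherd $Y$ back toward a neighborhood of the origin, while arranging $X$---despite having weakly more down and left probability at every site---to be funneled into an escape corridor out to infinity. A natural source of such behavior is a two-dimensional circulation pattern in which different regions are biased in different coordinate directions, so that the effective flow lines either spiral inward (for $Y$) or outward (for $X$), with the pointwise monotonicity constraint preserved locally but with the global transport reversed.

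To certify positive recurrence of $Y$, I would aim for a Foster--Lyapunov function $V\colon\mathbb{N}^2\to[0,\infty)$ with $V(n)\to\infty$ and $\mathbb{E}[V(Y_1)-V(Y_0)\mid Y_0=n]\le-\varepsilon$ outside a finite set. To certify transience of $X$, I would construct a bounded superharmonic function $U\colon\mathbb{N}^2\to[0,1]$ with $U(0,0)<\sup U$, which guarantees positive probability of never returning to the origin. Because $X\preceq Y$ forces $X$'s left and down probabilities to be at least $Y$'s at every site, the ``extra'' down/left mass in $X$ must be placed at sites where moving down or left actually pushes $X$ along its escape corridor rather than back toward the origin. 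This is the only mechanism by which ``more down and left locally'' can coexist with ``more outward globally,'' and it is the conceptual core of the counterexample.

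The main obstacle is precisely this tension: the coupling that trivially proves the one-dimensional statement pairs transitions so as to preserve componentwise order, and the construction must explicitly wreck any such global coupling. I therefore expect the example to require position-dependent transitions at which ``left'' and ``down'' steps cannot be decoupled into independent one-dimensional motions. The verification then reduces to (i) writing down explicit piecewise transition probabilities on which the Lyapunov functions $V$ and $U$ can be produced by hand and checked region by region, and (ii) checking the partial-order inequalities at every site, with extra care on the axes, where ellipticity permits fewer admissible transitions. Getting positive recurrence of $Y$ rather than merely recurrence should then follow automatically from a strong enough Foster drift condition in the bulk.
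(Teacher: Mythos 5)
Your proposal correctly identifies the mechanism that makes a counterexample possible---the extra down/left mass in $X$ must be placed where moving down or left pushes the walk along an escape corridor rather than toward the origin, with one axis acting as the transient route and the other (together with the interior drift of $Y$) acting as the recurrent route---and this is indeed the conceptual core of the paper's construction, which uses the positive $x$-axis as the escape route and gives $X$ a downward interior drift (toward that axis) versus $Y$'s leftward interior drift (toward the recurrent $y$-axis). However, the proposal stops at the level of a plan: the theorem is an existence statement whose entire content is a concrete example, and you never write down any transition probabilities, never exhibit the Foster--Lyapunov function $V$ or the bounded superharmonic function $U$ you invoke, and never verify the pointwise inequalities defining $X\preceq Y$. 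The genuinely delicate part---choosing numbers so that every site simultaneously satisfies the partial order, ellipticity, and the two opposite global behaviors---is exactly what is missing, so as written this is not a proof.

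On method, your proposed certificates would work in principle but are harder to execute than what the paper does. For positive recurrence of $Y$, the paper simply writes down an explicit stationary distribution of product-geometric form (proportional to $(5/7)^{i+j}$ with boundary corrections) and checks stationarity, which is both a proof of positive recurrence and easier to verify by hand than a drift condition near the reflecting boundaries. For transience of $X$, rather than constructing a bounded nonconstant superharmonic function on the quadrant (nontrivial, given the boundary redirections), the paper projects onto the one-dimensional functional $S_k=x_{2k}-y_{2k}$, stochastically dominates its increments from below by an i.i.d.\ sequence with strictly positive mean uniformly over all states, and applies Hoeffding's inequality to show $\sum_k \P(X_{2k}=(0,0))<\infty$. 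If you pursue your route, be aware that the drift inequality for $V$ must be checked separately on the interior, both axes, and the origin, and that producing $U$ explicitly is likely the hardest step; the projection-plus-concentration argument sidesteps both issues.
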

Our construction (see \cref{fig:schematic} for a schematic) is of a similar nature as the following perhaps more intuitive example of a non-nearest-neighbor walk on $\N$, for which we only sketch the main idea.
Suppose we have a random walk $Y$ on $\N$ whose jumps take values in $\set{-2,-1,1,2}$, and whose odd states drift towards $0$ and even states drift towards $\infty$, in such a way that the walk tends to stay on odd states enough that $Y$ is positive recurrent.
Then, one can make each state drift slightly more towards the left but in such a way that the walk tends to stay on the even states, so that the new Markov chain $X$ is transient.
Our construction essentially uses the positive $x$-axis as the escape route, analogous to the even states, while the positive $y$-axis plays the role of the odd states.

On the other hand, we find conditions under which recurrence is monotone with respect to $\preceq$.

\begin{definition}\label{definition:inward_homogeneous}
Say a random walk on $\N^2$ is \textit{inward-homogeneous} if it is of the following form:
\begin{equation}\label{equation:inward_homogeneous}
    \begin{cases} \text{right} & \text{w.p. } r_o \\ \text{up} & \text{w.p. } u_o, \end{cases} \qquad
    \begin{cases} \text{left} & \text{w.p. } \ell_x \\ \text{right} & \text{w.p. } r_x \\ \text{up} & \text{w.p. } u_x, \end{cases} \qquad
    \begin{cases} \text{down} & \text{w.p. } d_y \\ \text{up} & \text{w.p. } u_y \\ \text{right} & \text{w.p. } r_y, \end{cases} \qquad
    \begin{cases} \text{left} & \text{w.p. } \ell_q \\ \text{right} & \text{w.p. } r_q \\ \text{up} & \text{w.p. } u_q \\\text{down} & \text{w.p. } d_q, \end{cases}
\end{equation}
for $(0,0)$, $\{(i,0):i\geq1\}$, $\{(0,j):j\geq1\}$, and $\{(i,j):i,j\geq1\}$, respectively, where
\begin{equation}\label{equation:inward_homogeneous_conditions}
    r_o \geq r_x = r_q, \quad \ell_x \geq \ell_q, \quad u_x \geq u_q, \quad u_o \geq u_y = u_q, \quad d_y \geq d_q, \quad\text{and}\quad r_y \geq r_q.
\end{equation}
A random walk on $\N^2$ of the above form \eqref{equation:inward_homogeneous}, though not necessarily satisfying \eqref{equation:inward_homogeneous_conditions}, is commonly referred to as \textit{maximally homogeneous} in the literature, and is the one most studied in the literature for random walks on the quadrant; see, e.g., \cite{FMM1,FMM2,FIM}.
Typically, maximal homogeneity is defined to only require the mean drifts to be the same in the four regions as opposed to requiring the transition probabilities to be the same.
\end{definition}
One can view an inward-homogeneous random walk as being given by the transition probabilities $\ell_q$, $r_q$, $u_q$, and $d_q$ from the general quadrant case, along with three stochastic ``redirection rules'' if a forbidden move is selected, i.e., moving down while on the positive $x$-axis, moving left while on the positive $y$-axis, or moving left or down while at the origin.
The only condition for inward-homogeneity is that the positive $x$-axis cannot redirect towards the right, and the positive $y$-axis cannot redirect up. 
\begin{remark}\label{remark:queuing}
    Comparison ideas are widely used in the queuing literature; see, e.g., \cite{Stoyan_German} or its English translation \cite{Stoyan}.
    The queuing setup, specifically the Jackson network of two M/M/1 queues, translates to a continuous time random walk on $\N^2$, where movement in this continuous time random walk is dictated by Poisson rates, and boundary conditions are enforced by simply suppressing illegal moves, i.e., waiting until the Poisson clock of a valid move rings.
    Motivated by this option of being lazy instead of redirecting the illegal move, we introduce the following weaker condition in \cref{definition:weakly_inward_homogeneous}.
    Essentially, instead of applying redirection, e.g., redirecting from down to the left or up when on the positive $x$-axis, one can choose to not move, which in our non-lazy random walk setting corresponds to reallocating some of the down probability $d_q$ (in the case of the positive $x$-axis) to the other three directions proportional to $\ell_q$, $u_q$, and $r_q$.
    Because the following definition allows for both suppressing or redirecting the illegal move, we found that introducing the intermediate definition of inward-homogeneity was still instructive for intuition.
\end{remark}
\begin{definition}\label{definition:weakly_inward_homogeneous}
    Using the notation of \cref{definition:inward_homogeneous}, i.e., \eqref{equation:inward_homogeneous}, say a random walk on $\N^2$ is \textit{weakly inward-homogeneous} if
    \begin{equation}\label{equation:weakly_inward_homogeneous}
        \frac{\ell_x}{\ell_q},\frac{u_x}{u_q}\geq\frac{r_x}{r_q}\geq1 \quad\text{and}\quad \frac{d_y}{d_q},\frac{r_y}{r_q}\geq\frac{u_y}{u_q}\geq1.
    \end{equation}
\end{definition}
In our positive result we show that under a weakly inward-homogeneity condition, recurrence and positive recurrence are monotonic properties with respect to $\preceq$.
\begin{theorem}\label{theorem:monotonicity_recurrence}
    If an irreducible weakly inward-homogeneous random walk $X$ on $\N^2$ is recurrent (respectively, positive recurrent), then any irreducible random walk $Y$ on $\N^2$ such that $Y\preceq X$ is also recurrent (respectively, positive recurrent).
\end{theorem}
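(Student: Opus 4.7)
The plan is to prove the theorem via a coordinate-wise Markov coupling. I will construct a joint process $(X_n,Y_n)$ on $\N^2\times\N^2$ with the correct marginal laws and with $Y_n\le X_n$ coordinate-wise for all $n$, starting from $X_0=Y_0=(0,0)$. Because $(0,0)$ is the unique coordinate-wise minimum of $\N^2$, every visit of $X$ to the origin forces $Y$ to also be at the origin; recurrence of $X$ then transfers to recurrence of $Y$, and positive recurrence transfers since the return times of $Y$ to the origin are dominated by those of $X$. By the co-finite remark in \cref{section:introduction}, I may adjust transitions at finitely many states without affecting recurrence; in particular I may arrange that $X$ and $Y$ have identical transition laws at the origin, so that the coupling begins trivially with both walks making the same first step.

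When $X_n=Y_n$, the standard monotone same-state coupling afforded by $Y\preceq X$ preserves the order: one arranges the joint distribution on the pair of moves so that $Y$ moves right only when $X$ moves right, $Y$ moves up only when $X$ moves up, and dually $X$ moves left only when $Y$ moves left and $X$ moves down only when $Y$ moves down. This guarantees $Y_{n+1}\le X_{n+1}$. When $Y_n\lneq X_n$ strictly the coupling must instead relate $Y$'s transitions at $Y_n$ to $X$'s transitions at the different state $X_n$, and this is where weak inward-homogeneity of $X$ enters essentially. Its immediate consequences $\ell_x\ge \ell_q$, $r_x\ge r_q$, $u_x\ge u_q$ on the positive $x$-axis and $d_y\ge d_q$, $r_y\ge r_q$, $u_y\ge u_q$ on the positive $y$-axis --- together with the sharper ratio inequalities in \eqref{equation:weakly_inward_homogeneous} in mixed-boundary sub-cases, and with $Y\preceq X$ evaluated at $Y_n$ --- supply exactly the inequalities needed to construct a joint distribution on the pair of moves that rules out the problematic pairings, such as $Y$ moving right while $X$ moves left when their $x$-coordinates agree.

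The main obstacle is the exhaustive case analysis: there are several configurations depending on whether each of $X_n$ and $Y_n$ lies in the interior, on the $x$-axis, on the $y$-axis, or at the origin, and in each case one must exhibit a valid joint distribution preserving $Y\le X$. I expect each case to reduce to an elementary flow-type matching argument, where the combination of $Y\preceq X$ and weak inward-homogeneity precisely supplies the inequalities needed for feasibility, and where the ratio form of \eqref{equation:weakly_inward_homogeneous} (rather than the componentwise consequences alone) becomes essential exactly in the most delicate mixed-boundary sub-cases.
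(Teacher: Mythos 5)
Your overall strategy (a coupling in which $Y$ follows $X$'s first-stage selection and is then pushed inward) is the right one, but the invariant you propose to maintain --- exact coordinate-wise domination $Y_n\le X_n$ for all $n$ --- is not achievable by any coupling with the correct marginals, so the plan as stated has a genuine gap. Here is a concrete obstruction. Take $X$ inward-homogeneous with interior probabilities $\ell_q=0.3$, $r_q=0.1$, $u_q=0.1$, $d_q=0.5$ and origin probabilities $r_o=0.1$, $u_o=0.9$ (all the constraints in \eqref{equation:inward_homogeneous_conditions} are satisfied), and let $Y\preceq X$. After $Y$ has been inward-shifted once you can easily reach the configuration $Y_t=(0,0)$, $X_t=(1,1)$, which satisfies $Y_t\le X_t$. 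At the next step $X$ moves to $(1,0)$ with probability at least $d_q=0.5$, and the only state reachable from the origin that is $\le(1,0)$ is $(1,0)$ itself; but $Y$ moves to $(1,0)$ with probability at most $r_o=0.1$. Hence under \emph{any} joint law with the correct marginals, with probability at least $0.4$ you get $Y_{t+1}=(0,1)\not\le(1,0)=X_{t+1}$. Your co-finite adjustment at the origin does not repair this: the mismatch is between $Y$'s law at the origin and $X$'s law at the \emph{interior} state $(1,1)$, and the same phenomenon recurs with $Y$ on a boundary and $X$ at any of infinitely many interior or opposite-boundary states. Indeed, the paper's \cref{remark:return_tail_bounds} notes that the exact domination $Y_t\le X_t$ is only available in the special case $u_x=u_q$, $r_y=r_q$, where boundary redirection can never send the walk ``outward.''

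The repair, which is what the paper does, is to weaken the invariant: one couples so that $Y_t\le X_t+s_t$ for some $s_t\in\{(2,0),(1,1),(0,2)\}$, i.e., $Y$ may overshoot $X$ by total displacement $2$, and the case analysis shows this slack is never exceeded (parity via \eqref{equation:parity} is used to control the worst cases). This weaker invariant no longer gives ``$X$ at the origin forces $Y$ at the origin''; instead it gives $Y_t\in\{(0,0),(2,0),(1,1),(0,2)\}$ whenever $X_t=(0,0)$, and you then need an extra step --- ellipticity of $X$ (from irreducibility) plus $Y\preceq X$ guarantee a uniform $p>0$ with $\P(Y_{t+2}=(0,0))\ge p$ from each of these four states --- so that recurrence follows by Borel--Cantelli-type reasoning and positive recurrence by a geometric-trials bound on $\E[\tau_Y]$, rather than by the pathwise domination of return times you assert. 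Without weakening the invariant and adding this final conversion step, the case analysis you defer cannot be completed.
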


For the reverse direction, where we want transience of $X$ to imply transience of $Y$ for $Y\succeq X$, we can of course take the contrapositive of \cref{theorem:monotonicity_recurrence}, but this requires $Y$ to be weakly inward-homogeneous.
The following theorem instead assumes $X$ is weakly inward-homogeneous, and its proof is very similar to that of \cref{theorem:monotonicity_recurrence}.
\begin{theorem}\label{theorem:monotonicity_transience}
    If an irreducible weakly inward-homogeneous random walk $X$ on $\N^2$ is transient, then any irreducible random walk $Y$ on $\N^2$ such that $Y\succeq X$ is also transient.
\end{theorem}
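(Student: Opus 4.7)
The plan is to mimic the coupling argument used for \cref{theorem:monotonicity_recurrence}, with the direction of domination reversed. The goal is to construct a joint process $(X_t, Y_t)_{t \geq 0}$ on a single probability space, started from $X_0 = Y_0 = (0,0)$, whose marginals are distributed as $X$ and $Y$, and for which $X_t \leq Y_t$ holds coordinate-wise for every $t$. Given such a coupling, the conclusion is immediate: because $X_t, Y_t \in \N^2$, the inequality forces $\{t : Y_t = (0,0)\} \subseteq \{t : X_t = (0,0)\}$, so transience of $X$ (combined with irreducibility of $Y$) yields transience of $Y$.

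The coupling is built inductively. Given $(X_t, Y_t)$ with $X_t \leq Y_t$, I would define a joint distribution on $(X_{t+1}, Y_{t+1})$ with the correct marginals that preserves the inequality. The case analysis proceeds by the positions of $X_t$ and $Y_t$ (origin, $x$-axis, $y$-axis, or interior) and the slack $Y_t - X_t$. When the slack has strictly positive entries in both coordinates, or when $X_t = Y_t$ lies in the interior, the coupling is routine: the comparisons supplied by $Y \succeq X$ at $Y_t$'s state directly yield the mass needed to pair $X$'s outward (up/right) moves with outward moves of $Y$, and one then allocates the remaining inward mass of $X$ to the compatible moves of $Y$.

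The main obstacle lies in the tight boundary-interior configurations, for instance $X_t = (i, 0)$ on the $x$-axis with $Y_t = (i, 1)$ directly above it. In this situation, the boundary transition of $X$ has redistributed the forbidden downward mass $d_q$ among the other three directions, so $X$'s outward-directed mass on the boundary can in principle exceed what $Y$'s interior transition supplies via $Y \succeq X$ alone. The weak inward-homogeneity condition \eqref{equation:weakly_inward_homogeneous} enters precisely here: the inequalities $\ell_x/\ell_q, u_x/u_q \geq r_x/r_q \geq 1$ (and the symmetric condition on the $y$-axis) force the redistribution to occur in directions compatible with the coupling, and a direct probability-mass accounting shows that the required joint distribution remains feasible. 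This bookkeeping step is the technical heart of the proof and is the direct analogue of the corresponding step for recurrence, only with the inequalities reversed throughout; once it is carried out, the coupling extends inductively to all $t$, completing the argument.
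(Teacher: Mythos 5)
Your overall strategy---couple the two walks so that $Y$ dominates $X$ and conclude transience of $Y$ from transience of $X$---is the paper's strategy, but the invariant you propose to maintain, exact coordinate-wise domination $X_t\leq Y_t$ for all $t$, is too strong and cannot be preserved by any coupling of this type. The obstruction is exactly the boundary configuration you flag, but weak inward-homogeneity does \emph{not} resolve it in your favor: the definition permits the positive $x$-axis to redirect a blocked ``down'' move to ``up'' (it only forbids redirecting to ``right''), so with $X_t=(i,0)$ and $Y_t=(i,1)$ and the first-stage direction ``down,'' $X$ may step to $(i,1)$ while $Y$ steps to $(i,0)$, and no admissible outward shift is guaranteed to restore $X_{t+1}\leq Y_{t+1}$. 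The paper is explicit that exact domination only holds in the special subcase $u_x=u_q$, $r_y=r_q$ (see \cref{remark:return_tail_bounds}); in general one must weaken the invariant to $Y_t\geq X_t-s_t$ for some $s_t$ ranging over the finite set $S=\{(2,0),(1,1),(0,2)\}$, and the inductive case analysis (different redirection rules at the origin versus the two axes, combined with the parity constraint \eqref{equation:parity}) is what keeps the slack confined to $S$.

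Once the invariant is weakened, your concluding step must also change: $Y_t=(0,0)$ no longer forces $X_t=(0,0)$, only $X_t\in\{(i,j):i+j\leq2\}$. The correct deduction is that the number of returns of $Y$ to the origin is bounded by the number of visits of $X$ to this finite set, which is finite with positive probability when $X$ is transient; hence $Y$ is transient. As written, your proof has a genuine gap at the inductive step, and the containment $\{t:Y_t=(0,0)\}\subseteq\{t:X_t=(0,0)\}$ it relies on is unavailable.
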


Using \cref{theorem:monotonicity_recurrence,theorem:monotonicity_transience} and their contrapositive, we obtain the following result implying monotonicity when neither random walk is weakly inward-homogeneous, though a weakly inward-homogeneous random walk must exist between the two random walks.
\begin{corollary}\label{corollary:sandwich_monotonicity}
    Let $X$, $Y$, and $Z$ be irreducible random walks on $\N^2$ where $Y$ is weakly inward-homogeneous and $X\preceq Y \preceq Z$.
    If $Z$ is recurrent then $X$ is recurrent, and if $X$ is transient then $Z$ is transient.
\end{corollary}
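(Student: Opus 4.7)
The plan is to observe that the corollary follows directly from \cref{theorem:monotonicity_recurrence,theorem:monotonicity_transience} by using $Y$ as a pivot. Since $Y$ is the weakly inward-homogeneous random walk in the sandwich, both theorems apply to $Y$ (but not necessarily to $X$ or $Z$), and the strategy is to first determine the recurrence/transience status of $Y$ from the hypothesis, then propagate it to the walk on the other side.

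For the first statement, suppose $Z$ is recurrent. First I would show $Y$ must be recurrent: if instead $Y$ were transient, then since $Y$ is weakly inward-homogeneous and $Z \succeq Y$, \cref{theorem:monotonicity_transience} would force $Z$ to be transient, contradicting the hypothesis. Thus $Y$ is recurrent, and since $Y$ is weakly inward-homogeneous and $X \preceq Y$, \cref{theorem:monotonicity_recurrence} yields recurrence of $X$.

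For the second statement, suppose $X$ is transient. By the same pivoting argument, I would first conclude that $Y$ must be transient: if $Y$ were recurrent, then weak inward-homogeneity of $Y$ together with $X \preceq Y$ and \cref{theorem:monotonicity_recurrence} would force $X$ to be recurrent, a contradiction. Hence $Y$ is transient, and applying \cref{theorem:monotonicity_transience} to $Y$ with $Z \succeq Y$ gives transience of $Z$.

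There is no real obstacle here beyond bookkeeping: the content of the corollary is entirely that the weakly inward-homogeneous hypothesis in \cref{theorem:monotonicity_recurrence,theorem:monotonicity_transience} only needs to hold for some intermediate walk, not the walk we start from. The proof is just the two-step chaining described above, combined with the contrapositives of the two theorems to rule out the wrong status of $Y$.
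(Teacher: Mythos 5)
Your proof is correct and matches the paper's intended derivation exactly: the paper gives no separate proof but states that the corollary follows from Theorems \ref{theorem:monotonicity_recurrence} and \ref{theorem:monotonicity_transience} and their contrapositives, which is precisely your pivoting-through-$Y$ argument (using the standard dichotomy that an irreducible chain is either recurrent or transient).
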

\begin{remark}\label{remark:Foster_Lyapunov}
    The classification of recurrence and transience for random walks in the quadrant $\N^2$ goes back to Kingman~\cite{Kingman} in 1961 and Malyshev \cite{MalyshevBook,MalyshevRussian} in the early 1970s.
    As discussed in further detail in \cref{remark:Malyshev}, this work is able to immediately imply the two walks constructed for the proof of \cref{thm:ex} are respectively positive recurrent and transient, though this work did not comment on the non-monotonicity of recurrence, e.g., by providing an example demonstrating monotonicity fails.
    The Kingman--Malyshev result provides an essentially complete classification in the maximally homogeneous case where the interior drift is non-zero, with the classification depending on the drift vector in the interior and the two angles produced by the drifts at the boundaries.
    Malyshev's hypothesis of bounded increments was relaxed through many works \cite{Fayolle,Rosenkrantz,FMM2,VaninskiiLazareva,Zachary} to only require finite first moments.
    The essentially complete recurrence classification in the case of zero interior drift took much longer, until the early 1990s \cite{AFM,FMM1,FMM2}, and is more complex, depending on the increment covariance matrix in the interior as well as the two boundary reflection angles.

    The recurrence classification in the non-zero interior drift case is simple enough that it should essentially imply the monotonicity result of \cref{theorem:monotonicity_recurrence} in this case; one can obtain a quadratic Lyapunov function for showing positive recurrence, where the level curves are like those in \cite[Figure~3.3.1]{FMM2}.
    The zero interior drift case is more delicate, as we must consider the interior covariance, but more recent work, e.g., \cite{dCMW}, may be able to deduce a similar result by applying a suitable transformation to put the walk into a wedge.
    An advantage of these Foster--Lyapunov methods is that they depend only on the drifts (and sometimes the covariances), providing the potential to expand our results beyond nearest-neighbor walks.
    In the nearest-neighbor case, however, our coupling approach to prove \cref{theorem:monotonicity_recurrence,theorem:monotonicity_transience} is simple and addresses both cases simultaneously.
    
    We hope our scheme of comparing non-homogeneous random walks, where the general case seems entirely intractable, to tractable cases, such as weakly inward-homogeneous walks in \cref{corollary:sandwich_monotonicity}, is a fruitful idea for further work.
    Our paper establishes this comparison via a coupling approach, but these tractable cases, e.g., maximally homogeneous walks, are where Foster--Lyapunov methods are also successful.
    We welcome further work on this topic and leave the application of different methods to our comparison scheme as an open problem for an interested reader.
\end{remark}
\begin{remark}\label{remark:return_tail_bounds}
    We believe that with additional work, one may be able to extract more from our coupling idea in the positive recurrent case, i.e., the proof of \cref{theorem:monotonicity_recurrence} in \cref{section:homogeneous}, namely obtaining comparisons of tail bounds for return times for the two walks, e.g., inequalities of the form $\P(\tau\geq n)\leq C\P(\tau'\geq an+b)$ for constants $C$, $a$, and $b$.
    We leave this as an open problem.
    Such tail bounds help compute moments of passage times, which are important for numerous reasons, including mixing rates, tails of stationary measures, and more.
    In the case of the quadrant, bounds on moments of passage times in the maximally homogeneous case are given by Aspandiiarov, Iasnogorodski, and Menshikov~\cite{AIM}, and we refer readers to \cite{dCMW} for a more recent account.
    However, these methods are unable to directly compare the tail probabilities of two such walks, and so establishing such comparison inequalities, perhaps through a more detailed analysis of this coupling, would be of interest.

    In the special case of \cref{theorem:monotonicity_recurrence} where $X$ has $u_x=u_q$ and $r_y=r_q$, i.e., $X$ is inward-homogeneous and its redirection rules also forbid the positive $x$-axis redirecting up and similarly the positive $y$-axis redirecting right, so that they can only redirect left or down, respectively, if $\tau_X$ and $\tau_Y$ denote the return times of the origin in $X$ and $Y$, respectively, it is straightforward to modify our proof of \cref{theorem:monotonicity_recurrence} to see that we can couple $X$ and $Y$ such that $Y_t \leq X_t$ for all $t$ and thus $\tau_Y\leq \tau_X$, yielding the comparison inequality $\P(\tau_Y \geq n) \leq \P(\tau_X \geq n)$ for all $n$.
\end{remark}

A similar monotonicity question can be asked for slabs $\{(i,j)\in\Z^2:i\geq0,0\leq j \leq k\}$, which can be viewed as an interpolation between the one-dimensional environment of non-homogeneous random walks on $\N$, for which monotonicity always holds trivially, and the two-dimensional environment that we study.
We obtain analogous results for slabs in \cref{sec:slabs}.

\begin{remark}\label{remark:Rayleigh}
    Rayleigh's monotonicity law, introduced by Rayleigh \cite{Rayleigh} in 1899, may be able to make some statements on this question of when monotonicity of recurrence occurs, but is severely limited by the requirement that both $X$ and $Y$ must be Markov chains that correspond to electric networks (see \cite{DoyleSnell} for a comprehensive overview).
    A Markov chain corresponds to an electric network if there exist constants $c_{xy}\geq0$ for every unordered pair of two distinct states $x$ and $y$, such that letting $c_x = \sum_y c_{xy}$ for all states $x$, assuming only finitely many terms in each sum are positive, we have the transition probability from $x$ to $y$ is $c_{xy}/c_x$ and the transition probability from $y$ to $x$ is $c_{xy}/c_y$.
    When the Markov chain is positive recurrent, this is equivalent to the Markov chain being reversible, where the stationary distribution $\pi$ is given by $\pi_x\propto c_x$.
    This imposes certain conditions on a random walk on $\N^2$: for example, for 4 states $w$, $x$, $y$, and $z$, we have $\frac{c_{wx}}{c_{xy}}\cdot\frac{c_{xy}}{c_{yz}}\cdot\frac{c_{yz}}{c_{wz}}\cdot\frac{c_{wz}}{c_{wx}}=1$.
    Letting $(w,x,y,z)=((0,0),(1,0),(1,1),(0,1))$ in an inward-homogeneous random walk on $\N^2$, this implies $\frac{\ell_x d_q r_y u_o}{u_x \ell_q d_y r_o}=1$.
    Considering two other ``squares'' $((1,0),(2,0),(2,1),(1,1))$ and $((0,1),(1,1),(1,2),(0,2))$ yields $\frac{r_x \ell_q}{\ell_x r_q}=1$ and $\frac{u_q d_y}{u_y d_q}=1$.
    These conditions severely limit the applicability of Rayleigh monotonicity to two random walks $X\preceq Y$.
    The restrictiveness of the setting of Rayleigh monotonicity, i.e., reversibility, can also easily be seen in the case of non-nearest-neighbor walks on $\N$.
    
    However, one can use Rayleigh monotonicity to show monotonicity of recurrence holds for trees, in the following sense.
    First, for two random walks $X$ and $Y$ on the same (undirected) tree rooted at $r$, there are two natural ways to define $X\preceq Y$ analogous to our definition for random walks on $\N^2$: 
    we can either simply require that for all states $v\neq r$, the probability of transitioning from $v$ to its parent in $X$ is greater than or equal to that in $Y$; 
    or, we can additionally require that for each state $v$ and for any of its children $w$, the transition probability of $v$ to $w$ in $X$ is less than or equal to that in $Y$.
    Under the former definition, which does not control the relative balance of the transition probabilities from a state to its children, one can construct a counterexample to monotonicity of recurrence, i.e., an example $X\preceq Y$ such that $X$ is transient yet $Y$ is (positive) recurrent.
    We provide such a counterexample in \cref{theorem:tree_counterexample}.
    However, under the latter definition, one can use Rayleigh monotonicity to show that monotonicity of recurrence---as well as monotonicity of the expected return time to the root for positive recurrent chains---holds in general; we establish this in \cref{theorem:tree_monotonicity}.
\end{remark}

These results on $\N^2$, the slab, and trees show that the subset of parameter space that yields recurrent random walks possesses some geometric properties, in this case the structure of an order ideal with respect to some natural partial order.
Motivated by this perspective, 
we consider the more symmetric setting of homogeneous random walks on finitely generated abelian groups.
For a finitely generated abelian group $G$, we can identify the homogeneous random walks that transition according to some probability distribution over some fixed finite subset $S\subseteq G$ with the probability simplex on $S$.
We then consider the subset $R$ of this simplex that corresponds to recurrent random walks, and ask when it possesses certain geometric properties, namely various topological properties and convexity.
We answer some of these questions in \cref{sec:parameter_space}: in particular, we show that $R$ is closed, and if $S$ is symmetric, show $R$ is path-connected and additionally show $R$ is convex if and only if its effective dimension is at most 2 (see \cref{corollary:recurrence_convex_symmetric} for the precise formulation).
We also show its complement $R^c$ is in some sense typically path-connected but not convex.

\begin{remark}\label{remark:chung_fuchs}
    We comment on some of the similarities between our work and that on homogeneous random walks on $\R^d$, i.e., sums of i.i.d.\ random variables with distribution $\mu$.
    In this setting, the Chung--Fuchs theorem \cite{ChungFuchs} completely classifies when the random walk is recurrent, but questions concerning monotonicity and convexity are nontrivial in general.
    Of course, when $d\geq3$ and $\mu$ is nondegenerate with mild regularity, e.g., finite second moments certainly suffice, the Chung--Fuchs theorem implies all walks are transient, while for $d\in\set{1,2}$, if the $d$th moment exists and $\mu$ has mean 0, then the walk is recurrent, so the situation is clear.
    In general, however, when moments need not exist, Mineka \cite{Mineka_Mixture} showed for $d=1$ that mixtures of recurrent random walks need not be recurrent, even when $\mu$ is symmetric, demonstrating an example of non-convexity.
    And in terms of monotonicity, Shepp \cite{Shepp_symmetricRW}, later generalized by Dharmadhikari and Joag-dev \cite{DJ_symmetricRW}, showed that for unimodal transition laws $F$ and $G$, if $F$ is less peaked than $G$, then recurrence of $F$ implies recurrence of $G$.
\end{remark}

In \cref{section:counterexample}, we prove \cref{thm:ex}, and in \cref{section:homogeneous}, we prove \cref{theorem:monotonicity_recurrence,theorem:monotonicity_transience}.
We prove analogues of \cref{thm:ex,theorem:monotonicity_recurrence,theorem:monotonicity_transience,corollary:sandwich_monotonicity} for slabs, i.e., state spaces of the form $\{(i,j)\in\Z^2:i\geq0,0\leq j \leq k\}$ for some positive integer $k$, in \cref{sec:slabs}.
We prove the results about trees, \cref{theorem:tree_counterexample,theorem:tree_monotonicity}, in \cref{sec:trees}.
Lastly, in \cref{sec:parameter_space} we consider whether the subset $R$ of parameter space that yields recurrent random walks, for homogeneous random walks on finitely generated abelian groups, possesses various geometric and topological properties.

\section{A Counterexample to Monotonicity}\label{section:counterexample}
In this section, we show that recurrence, positive recurrence, and transience are not monotonic properties of random walks on $\N^2$ with respect to the partial order $\preceq$.
To do so, we construct a positive recurrent random walk $Y$ and a transient random walk $X$ such that $X\preceq Y$, proving \cref{thm:ex}.
In fact, in our example, the inequalities defining $X\preceq Y$ will all strictly hold.

\begin{remark}\label{remark:Malyshev}
    In a 1972 Russian paper \cite{MalyshevRussian} with English translation \cite{MalyshevEnglish}, as well as earlier in his 1970 book \cite{MalyshevBook}, Malyshev characterized the recurrent homogeneous random walks on $\N^2$, where in this context homogeneous means the mean drift is the same for all $\{(i,j):i,j\geq1\}$, the mean drift is the same on the positive $x$-axis $\{(i,0):i\geq1\}$, and the mean drift is the same on the positive $y$-axis $\{(0,j):j\geq1\}$.
    His result applies to both our examples, i.e., it can be used to verify that $X$ is positive recurrent and $Y$ is transient.
    He did not comment on the non-monotonicity of his recurrence characterization, nor did he provide an example demonstrating monotonicity fails.
    Our proof of \cref{thm:ex} differs from that of Malyshev, with our examples designed to be amenable to much simpler arguments, and we found it instructive to include this proof as we use a similar proof technique in the case of random walks on slabs, which have more complicated boundaries, in \cref{sec:slabs}.
\end{remark}

See \cref{fig:schematic} for a visual schematic of the constructions for $X$ and $Y$.
For each random walk, the transition probabilities for the states on the positive $x$-axis are the same, as well as for the states on the positive $y$-axis, and for the states not on either axis; in that sense, there are only four types of vertices, and we draw the expectation of their transition for each of the four types.

\begin{figure}[tbp]
    \centering
    \begin{subfigure}{65mm}
        \centering
        \begin{tikzpicture}[scale=0.4,font=\normalsize,baseline,thick]
            \draw[->,>=latex,dotted] (0,0) -- (10,0);
            \draw[->,>=latex,dotted] (0,0) -- (0,10);
            \draw[->,>=latex] (6,0) -- (6+4/12*5,1/2*5);
            \draw[->,>=latex] (0,6) -- (1/20*5,6-89/120*5+5/24*5);
            \draw[->,>=latex] (0,0) -- (9/14*5,5/14*5);
            \draw[->,>=latex] (6,6) -- (6-1/6*5,6);
        \end{tikzpicture}
        \caption{The positive recurrent random walk $X$.}
        \label{subfig:X}
    \end{subfigure}
    \begin{subfigure}{65mm}
        \centering
        \begin{tikzpicture}[scale=0.4,font=\normalsize,baseline,thick]
            \draw[->,>=latex,dotted] (0,0) -- (10,0);
            \draw[->,>=latex,dotted] (0,0) -- (0,10);
            \draw[->,>=latex] (6,0) -- (6+31/100*5,49/100*5);
            \draw[->,>=latex] (0,6) -- (1/25*5,6-19/25*5+5/25*5);
            \draw[->,>=latex] (0,0) -- (9/14*5,5/14*5);
            \draw[->,>=latex] (6,6) -- (6-17/100*5,6-73/100*5);
        \end{tikzpicture}
        \caption{The transient random walk $Y$.}
        \label{subfig:Y}
    \end{subfigure}
    \caption{Schematic diagrams of the constructed positive recurrent $X$ and transient $Y$ used to prove \cref{thm:ex}. Arrows show the expected movement starting from the origin, $x$-axis, $y$-axis, and the rest of the quadrant.}
    \label{fig:schematic}
\end{figure}
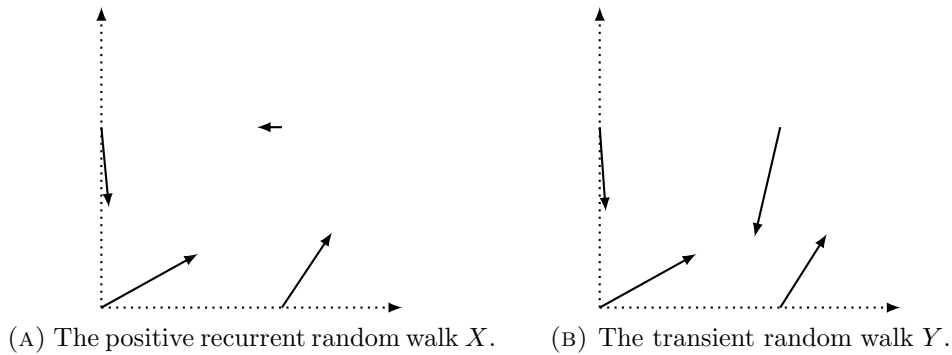

As one can see, the behavior on the $X$-axis is transient while the behavior on the $Y$-axis is recurrent.
For the recurrent walk in the interior of the quadrant, there is a drift to the left while for the transient walk there is a drift downwards.
This is the idea behind the following example.
\subsection{Recurrent Random Walk}\label{subsection:recurrent}
Consider the following random walk on $\N^2$:
\begin{align*}
    \begin{cases} \text{right} & \text{w.p. }\frac{9}{14} \\ \text{up} & \text{w.p. }\frac{5}{14}, \end{cases} \qquad
    \begin{cases} \text{up} & \text{w.p. }\frac{1}{2} \\ \text{right} & \text{w.p. }\frac{5}{12} \\ \text{left} & \text{w.p. }\frac{1}{12}, \end{cases} \qquad
    \begin{cases} \text{down} & \text{w.p. }\frac{89}{120} \\ \text{up} & \text{w.p. }\frac{5}{24} \\ \text{right} & \text{w.p. }\frac{1}{20}, \end{cases} \qquad
    \begin{cases} \text{up} & \text{w.p. }\frac{3}{8} \\ \text{down} & \text{w.p. }\frac{3}{8} \\ \text{left} & \text{w.p. }\frac{5}{24} \\\text{right} & \text{w.p. }\frac{1}{24}, \end{cases}
\end{align*}
for $(0,0)$, $\{(i,0):i\geq1\}$, $\{(0,j):j\geq1\}$, and $\{(i,j):i,j\geq1\}$, respectively.
It is easy to see that this Markov chain is irreducible and 2-periodic.
Hence, to show that it is positive recurrent, it suffices to prove a stationary distribution exists.
It is straightforward to verify that the following distribution is stationary:
\begin{align*}
    \pi_{i,j} =
    \begin{cases}
        \frac{1}{Z}\cdot\left(\frac{5}{7}\right)^{i+j} & i,j\geq 1 \\
        \frac{1}{Z}\cdot\frac{3}{4}\left(\frac{5}{7}\right)^i & j=0\text{ and } i \geq 1 \\
        \frac{1}{Z}\cdot\frac{5}{6}\left(\frac{5}{7}\right)^{j} & i = 0\text{ and } j \geq 1 \\
        \frac{1}{Z}\cdot\frac{35}{72} & (i,j) = (0,0),
    \end{cases}
\end{align*}
where $Z<\infty$ normalizes $\pi$ so that $\sum_{i,j} \pi_{i,j} = 1$.

\subsection{Transient Random Walk}\label{subsection:transient}
Now, consider the following random walk on $\N^2$:
\begin{align*}
    \begin{cases} \text{right} & \text{w.p. }\frac{9}{14} \\ \text{up} & \text{w.p. }\frac{5}{14}, \end{cases} \qquad
    \begin{cases} \text{up} & \text{w.p. }\frac{49}{100} \\ \text{right} & \text{w.p. }\frac{41}{100} \\ \text{left} & \text{w.p. }\frac{1}{10}, \end{cases} \qquad
    \begin{cases} \text{down} & \text{w.p. }\frac{19}{25} \\ \text{up} & \text{w.p. }\frac{5}{25} \\ \text{right} & \text{w.p. }\frac{1}{25}, \end{cases} \qquad
    \begin{cases} \text{up} & \text{w.p. }\frac{1}{100} \\ \text{down} & \text{w.p. }\frac{74}{100} \\ \text{left} & \text{w.p. }\frac{21}{100} \\\text{right} & \text{w.p. }\frac{4}{100}, \end{cases}
\end{align*}
for $(0,0)$, $\{(i,0):i\geq1\}$, $\{(0,j):j\geq1\}$, and $\{(i,j):i,j\geq1\}$, respectively.
Again, this Markov chain is irreducible and 2-periodic.
Note that it strictly satisfies the desired partial order relationship with the Markov chain from \cref{subsection:recurrent} in the sense that for $\{(i,j):i,j\geq1\}$, the left and down probabilities strictly increased while the right and up probabilities strictly decreased, and similarly for the axes, except of course the down probability and left probability for the $x$-axis and $y$-axis, respectively, must be zero and thus do not increase.

To show transience, consider a random walk $X_n$ with $X_0=(0,0)$.
It suffices to show
\[ \sum_{k=1}^\infty \P(X_k=(0,0)) = \sum_{k=1}^\infty \P(X_{2k}=(0,0)) < \infty, \]
as this is the expected number of times $X_n$ returns to $(0,0)$.

Let $S_k = x_{2k} - y_{2k}$ where $X_n=(x_n,y_n)$.
Let $A_k = S_k - S_{k-1}$ for $k\geq1$.
Regardless of $X_{2k-2}$, which is in $\{(i,j):i+j\text{ is even}\}$, we can couple $A_k$ with $B_k$ such that $A_k \geq B_k$, where $B_k$ has marginal distribution
\[ B_k = \begin{cases} -2 & \text{w.p. }\frac{1668}{10000} \\ 0 & \text{w.p. }\frac{6651}{10000} \\ 2 & \text{w.p. }\frac{1681}{10000}, \end{cases} \]
for all $k\geq 1$.
As $A_k\in\{-2,0,2\}$ always, this simply means that regardless of $X_{2k-2}$, we have $A_k=-2$ with probability at most $\frac{1668}{10000}$ and have $A_k=2$ with probability at least $\frac{1681}{10000}$; this can be verified for all possible values of $X_{2k-2}$.
This coupling is tight, i.e., $A_k=B_k$, when $y_{2k-2}=0$ and $x_{2k-2}>0$.
Crucially, the $B_k$ are mutually independent.
Let $T_k = \sum_{\ell=1}^k B_\ell \leq \sum_{\ell=1}^k A_\ell = S_k$. Note that $T_k$ has a positive drift and by Hoeffding's inequality, for all $k\geq1$,
\begin{align*}
    \P(X_{2k}=(0,0))
    &\leq \P(S_k = 0)
    \leq \P(T_k \leq 0)
    \leq \exp\left(-\frac{2\left(\frac{13}{10000}k\right)^2}{16k}\right)
    = \exp\left(\frac{-169k}{80000}\right),
\end{align*}
so
\[ \sum_{k=1}^\infty \P(X_{2k}=(0,0)) \leq \sum_{k=1}^\infty \exp\left(\frac{-169k}{80000}\right) < \infty. \]

\section{Monotonicity in Homogeneous Random Walks}\label{section:homogeneous}
We now consider conditions under which recurrence and transience are indeed monotonic properties with respect to the partial order $\preceq$ and prove \cref{theorem:monotonicity_recurrence,theorem:monotonicity_transience}.
For simplicity of presentation, we prove \cref{theorem:monotonicity_recurrence,theorem:monotonicity_transience} under an inward-homogeneous condition, and then in \cref{remark:weakly_inward_homogeneous_generalization} explain the minor modification needed to extend the proof to the full generality of weak inward-homogeneity.

We will use the following notation. 
Let $\leq$ denote the partial order on $\Z^2$ given by $(i_1,j_1)\leq(i_2,j_2)$ if $i_1\leq i_2$ and $j_1\leq j_2$.
For two elements $x=(i_1,j_1)$ and $y=(i_2,j_2)$ in $\Z^2$, we let $x+y$ denote $(i_1+i_2,j_1+j_2)$.
We view $\N^2$ as a subset of $\Z^2$.
\begin{proof}[Proof of \cref{theorem:monotonicity_recurrence}]
Let $X$ and $Y$ both start at the origin.
Note that irreducibility of $X$ implies it is elliptic, i.e., all probabilities in \cref{definition:inward_homogeneous} are positive.
It suffices to show that we can couple $X$ and $Y$ such that $Y_t \leq X_t+s_t$ for some $s_t\in\{(2,0),(1,1),(0,2)\}$ for all $t$.
Let $S=\{(2,0),(1,1),(0,2)\}$.
This is because for each time $t$ such that $X_t=(0,0)$, we have $Y_t\in\{(0,0),(2,0),(1,1),(0,2)\}$, as $X$ and $Y$ are both 2-periodic, so the probability that $Y_{t+2}=(0,0)$ is at least some positive constant $p>0$.
So if $X$ is recurrent and thus $X_t=(0,0)$ for infinitely many $t$ regardless of $X_0$, the probability that $Y$ never returns to $(0,0)$ is 0.
And if $X$ is positive recurrent so that some upper bound $\tau<\infty$ is greater than or equal to the four expected hitting times of $(0,0)$ starting at one of the four states in $S\cup\{(0,0)\}$, then the expected return time for $Y$ is at most $\frac{\tau+2}{p}$, and thus $Y$ is also positive recurrent.

As $Y\preceq X$, we can view each step of $Y$ as following a two-stage process: at the current state, select a direction based on the transition probabilities of $X$ at this state, and then if the direction is up or right, with some probability we instead go left or down.
The second ``inward shift'' step converts the transition probabilities of $X$ at this state to the desired transition probabilities of $Y$ at this state.

Using the redirection framework for inward-homogeneity, we can subdivide each step of $X$ with another two-stage process: first selecting left, right, up, or down based on $\ell_q$, $r_q$, $u_q$, and $d_q$, and then redirecting as necessary.
This subdivision transforms the process of determining each step of $Y$ into a three-stage process.

We can now couple $X$ and $Y$ as follows, so that the first two stages of $Y$ coincide with the two stages of $X$.
We first use the same selection of left, right, up, or down based on $\ell_q$, $r_q$, $u_q$, and $d_q$.
Then $X$ and $Y$ both redirect as necessary, but if $X$ and $Y$ use the same redirection rule, i.e., both are on the positive $x$-axis, both are on the positive $y$-axis, or both are at the origin, the same redirection choice is made.
Finally, if the selected move for $Y$ is up or right, we inward shift as necessary.

It now remains to verify that this coupling satisfies $Y_t \leq X_t + s_t$ for some $s_t\in S$ for all $t$, which we prove by induction.
We start with $X_0=Y_0=(0,0)$, and for fixed $t\geq 1$, suppose that $Y_{t-1} \leq X_{t-1} + s_{t-1}$ for some $s_{t-1}\in S$.
Let $Y_t' \geq Y_t$ be the state obtained from $Y_t$ by ignoring inward shifting at timestep $t$; it suffices to show $Y_t' \leq X_t + s_t$ for some $s_t\in S$.
The only way $Y_t' \leq X_t + s_{t-1}$ could not hold is if $X$ and $Y$ have different redirection rules (with no redirection being a fourth possibility), as otherwise $Y_t'-Y_{t-1}=X_t-X_{t-1}$ and thus $Y_t' \leq X_t + s_{t-1}$ then follows from the inductive hypothesis.

We split into cases depending on the selection of left, right, up, or down based on $\ell_q$, $r_q$, $u_q$, and $d_q$, i.e., the first stage of the three-stage coupling.
Note that up and right movements never get redirected, so we only need to check down and left.
Let $X_t=(i_t^{(1)},j_t^{(1)})$ and let $Y_t=(i_t^{(2)},j_t^{(2)})$.
Also let $Y_t'=(i_t^{(2')},j_t^{(2')})$.
Note that the 2-periodicity of our random walks implies
\begin{equation}\label{equation:parity}
    i_t^{(1)}+j_t^{(1)}\equiv i_t^{(2)}+j_t^{(2)}\equiv i_t^{(2')}+j_t^{(2')}\equiv t\pmod{2}.
\end{equation}

\noindent\textbf{Case 1}: left.
Redirections only occur on the $y$-axis.
If both $X$ and $Y'$ redirect but under different rules, then one of them is at the origin while the other is on the positive $y$-axis.
If $Y_{t-1}=(0,0)$, then $Y_t' \leq (1,1) \leq X_t + (1,1)$.
Otherwise, $X_{t-1}=(0,0)$ and $Y_{t-1}=(0,2)$, where $Y'$ can only move down or right; if $Y'$ moves right, then $Y_t'-X_t=(1,1)$ or $(0,2)$ if $X$ moves up or right, respectively, and if $Y'$ moves down, then $Y_t' \leq X_t + (1,1)$.

If instead exactly one of $X$ and $Y'$ redirects, first suppose $X$ redirects while $Y'$ moves left, so that $i_{t-1}^{(1)}=0$ and $i_{t-1}^{(2)}\in\{1,2\}$.
If $i_{t-1}^{(2)}=2$ so that $i_t^{(2')}=1$, we have $s_{t-1}=(2,0)$ so $j_{t-1}^{(2)}\leq j_{t-1}^{(1)}$, and thus $Y_t' \leq X_t + (1,1)$.
Otherwise $i_{t-1}^{(2)}=1$ so that $i_t^{(2')}=0$, and thus $s_{t-1}\in\{(1,1),(2,0)\}$ so $j_{t-1}^{(2)} \leq j_{t-1}^{(1)}+1$, and thus $Y_t' \leq X_t + (0,2)$.

Lastly, suppose $Y'$ redirects while $X$ moves left, so that $i_{t-1}^{(2)}=0$ and $i_{t-1}^{(1)} \geq 1$.
If $Y'$ moves up, meaning $Y_{t-1}=(0,0)$, then $Y_t' \leq X_t + (0,2)$.
As we always have $j_{t-1}^{(2)} \leq j_{t-1}^{(1)}+2$, if $Y'$ moves down, then $Y_t' \leq X_t + (0,2)$.
If $Y'$ moves right, then $Y_t' \leq X_t + (0,2)$ if $i_{t-1}^{(1)}\geq 2$, and if $i_{t-1}^{(1)}=1$, by \eqref{equation:parity} we have $j_{t-1}^{(2)} \leq j_{t-1}^{(1)}+1$, so $Y_t' \leq X_t + (1,1)$.

\noindent\textbf{Case 2}: down.
The argument is identical to case 1, but reflected across the line $y=x$.
\end{proof}
\begin{remark}\label{remark:monotonicity_recurrence_irreducible}
    The assumptions of irreducibility in \cref{theorem:monotonicity_recurrence} can be removed, rephrasing recurrence as recurrence of the state $(0,0)$.
    The proof of \cref{theorem:monotonicity_recurrence} only uses irreducibility to imply ellipticity, which is used to show that if $Y_t\in\{(0,0),(2,0),(1,1),(0,2)\}$, then in each of these four cases we have $Y_{t+2}=(0,0)$ with positive probability.
    However, this follows from the assumptions that $Y\preceq X$ and $X$ is inward-homogeneous.
    If $Y_t=(0,0)$ implies $Y_{t+2}\neq(0,0)$ with probability 1, without loss of generality suppose $Y_{t+1}=(1,0)$ with positive probability.
    Then $Y$ cannot move left from $(1,0)$, implying $\ell_q=\ell_x=0$ for $X$, so with some probability $X_1=(1,0)$, from which it cannot return to $(0,0)$, contradicting recurrence of $X$.
    Essentially identical arguments address the cases $Y_t=(2,0)$ and $Y_t=(0,2)$.
    For $Y_t=(1,1)$, there are four cases for how one could have $Y_{t+2}\neq(0,0)$ with probability 1; we show all contradict recurrence of $(0,0)$ in $X$.

    \noindent\textbf{Case 1}: $Y$ cannot move down from $(0,1)$ and cannot move left from $(1,0)$.
    Then $X$ also cannot make these moves, and thus cannot ever return to $(0,0)$.

    \noindent\textbf{Case 2}: $Y$ cannot move down from $(0,1)$ and cannot move down from $(1,1)$.
    Then $X$ cannot ever move down, yet as $Y_t=(1,1)$, with positive probability $X$ can reach $(1,1)$, from which it cannot return to $(0,0)$.

    \noindent\textbf{Case 3}: $Y$ cannot move left from $(1,1)$ and cannot move left from $(1,0)$.
    This case is the reflection of case 2, and essentially the same argument holds.

    \noindent\textbf{Case 4}: $Y$ cannot move left from $(1,1)$ and cannot move down from $(1,1)$.
    Then $X$ cannot move left or down in $\{(i,j):i,j\geq1\}$, yet as $Y_t=(1,1)$, with positive probability $X$ can reach $\{(i,j):i,j\geq1\}$, from which it cannot return to $(0,0)$.
\end{remark}

We now prove \cref{theorem:monotonicity_transience}. 
\begin{proof}[Proof of \cref{theorem:monotonicity_transience}]
    Let $X$ and $Y$ both start at $(0,0)$.
    Let $S=\{(2,0),(1,1),(0,2)\}$.
    It suffices to show that we can couple $X$ and $Y$ such that $Y_t \geq X_t - s_t$ for some $s_t\in\{(2,0),(1,1),(0,2)\}$ for all $t$.
    This is because $X$ being transient implies that with some positive probability, $X$ visits $\{(i,j):i+j\leq 2\}$ finitely often, and thus $Y$ returns to $(0,0)$ finitely often.

    We use the same coupling as in the proof of \cref{theorem:monotonicity_recurrence}, except in the third stage, if the selected move for $Y$ is down or left, we may apply an ``outward shift'' and instead go up or right.
    We prove $Y_t \geq X_t - s_t$ for some $s_t\in S$ for all $t$ by induction on $t$.
    We start with $X_0=Y_0=(0,0)$, and for fixed $t\geq 1$, suppose $Y_{t-1} \geq X_{t-1}-s_{t-1}$ for some $s_{t-1}\in S$.
    Let $Y_t'$ be the state obtained from $Y_t$ by ignoring outward shifting at timestep $t$; it suffices to show $Y_t' \geq X_t - s_t$ for some $s_t\in S$, as $Y_t \geq Y_t'$ always.
    Similar to before, we only need to consider when $X$ and $Y$ have different redirection rules, and we split into cases depending on the selection of left, right, up, or down.
    As up and right movements never get redirected, by reflecting across the line $y=x$, without loss of generality it suffices to address the left case.
    Let $X_t=(i_t^{(1)}, j_t^{(1)})$, $Y_t=(i_t^{(2)},j_t^{(2)})$, and $Y_t'=(i_t^{(2')},j_t^{(2')})$.

    As redirections only occur on the $y$-axis, if both $X$ and $Y'$ redirect but under different rules, then one of them is at the origin while the other is on the positive $y$-axis.
    If $X_{t-1}=(0,0)$, then $Y_t' \geq (1,1) - (1,1) \geq X_t - (1,1)$.
    Otherwise $Y_{t-1}=(0,0)$, so $X_{t-1}=(0,2)$, where $X$ can only move down or right; if $X$ moves right, then $X_t-Y_t'=(1,1)$ or $X_t-Y_t'=(0,2)$ if $Y'$ moves up or right, respectively, and if $X$ moves down, then $Y_t' \geq X_t-(1,1)$.

    If instead exactly one of $X$ and $Y'$ redirects, first suppose $Y$ redirects while $X'$ moves left, so that $i_{t-1}^{(2)}=0$ and $i_{t-1}^{(1)}\in\{1,2\}$.
    If $i_{t-1}^{(1)}=2$ so that $i_t^{(1)}=1$, we have $s_{t-1}=(2,0)$ so $j_{t-1}^{(2)} \geq j_{t-1}^{(1)}$, and thus $Y_t' \geq X_t - (1,1)$.
    Otherwise $i_{t-1}^{(1)}=1$ so that $i_t^{(1)}=0$, and thus $s_{t-1}\in\{(1,1),(2,0)\}$ so $j_{t-1}^{(2)} \geq j_{t-1}^{(1)} - 1$, and thus $Y_t' \geq X_t - (0,2)$.

    Lastly, suppose $X$ redirects while $Y'$ moves left, so that $i_{t-1}^{(1)}=0$ and $i_{t-1}^{(2)} \geq 1$.
    If $X$ moves up, meaning $X_{t-1}=(0,0)$, then $Y_t' \geq (0,-1) = X_t - (0,2)$.
    As we always have $j_{t-1}^{(2)} \geq j_{t-1}^{(1)} - 2$, if $X$ moves down, then $Y_t' \geq X_t - (0,2)$.
    If $X$ moves right, then $Y_t' \geq X_t - (0,2)$ if $i_{t-1}^{(2)} \geq 2$, and if $i_{t-1}^{(2)}=1$, by \eqref{equation:parity} we have $j_{t-1}^{(2)} \geq j_{t-1}^{(1)} - 1$, so $Y_t' \geq X_t - (1,1)$.
\end{proof}
\begin{remark}\label{remark:weakly_inward_homogeneous_generalization}
    Now assume $X$ is only weakly inward-homogeneous.
    Then when $X$ is on the positive $x$-axis, we can think of it as being lazy with probability $p_x:=\frac{r_x-r_q}{r_x}=1-\frac{r_q}{r_x}$, go right with probability $\frac{r_q}{r_x}r_x=r_q$, go left with probability $\frac{r_q}{r_x}\ell_x\geq\ell_q$, and go up with probability $\frac{r_q}{r_x}u_x\geq u_q$, so that its movement probabilities once it moves is still $\ell_x$, $u_x$, and $r_x$, as required.
    Similarly let $X$ be lazy on the positive $y$-axis.
    This laziness does not change whether $X$ is (positive) recurrent or transient.
    Then couple $(X,Y)$ as before with our three-stage process of picking one of the four directions, redirecting, and inward shifting, except now if say we need to redirect from choosing to go down while on the positive $x$-axis, with probability $\frac{p_x}{d_q}$ we do not move, and otherwise we redirect as necessary; of course, if $X$ and $Y$ are both on the positive $x$-axis, the decision to stay put or redirect is shared between $X$ and $Y$.
    Then it's easy to verify that the inequalities $Y_t \leq X_t + s_t$ and $Y_t\geq X_t - s_t$ in the proofs of \cref{theorem:monotonicity_recurrence,theorem:monotonicity_transience}, respectively, still hold with this coupling, and so the proof follows identically.
\end{remark}

\section{Random Walks on a Slab} \label{sec:slabs}

For positive integer $k$, let the \textit{slab} of \textit{thickness} $k$ be $\{(i,j)\in\Z^2:i\geq0,0\leq j \leq k\}$.
Intuitively, the slab is an intermediate environment for random walks between the 1-dimensional random walks on $\N$ and the 2-dimensional random walks on $\N^2$, which are recovered in the cases $k=0$ and $k=\infty$, respectively.
For a slab of thickness $k$, we partition the slab into six (possibly empty) regions: the \textit{center} $\{(i,j):i\geq1,0<j<k\}$, the \textit{upper boundary} $\{(i,k):i\geq1\}$, the \textit{lower boundary} $\{(i,0):i\geq1\}$, the \textit{left boundary} $\{(0,j):0<j<k\}$, the origin $(0,0)$, and the \textit{upper corner} $(0,k)$.

In \cref{subsection:slab_counterexample}, similar to \cref{section:counterexample}, we provide a counterexample to monotonicity of recurrence, positive recurrence, and transience of random walks on the slab with respect to the partial order $\preceq$ defined analogously to \cref{definition:N2_partial_order}.
The construction works for all $k\geq2$, and while a different construction can work for $k=1$, for sake of brevity we do not study the case $k=1$.
Furthermore, in \cref{subsection:slab_homogeneity}, similar to \cref{section:homogeneous}, we show that for $k\geq2$, monotonicity holds under a different and arguably more natural partial order of random walks on the slab, assuming a homogeneity condition similar to \cref{definition:inward_homogeneous} of inward-homogeneity.
Recalling the definition of inward-homogeneity is based on transition probabilities $\ell_q$, $r_q$, $u_q$, and $d_q$ in the general case of states lying on neither axis, this concept of homogeneity does not neatly transfer to the case $k=1$, where there is no center.
Thus, we henceforth restrict attention to $k\geq2$.

\subsection{A counterexample to monotonicity}\label{subsection:slab_counterexample}
Using the essentially identical definition of $\preceq$ for random walks on $\N^2$ from \cref{definition:N2_partial_order} for random walks on the slab, we first show that recurrence, positive recurrence, and transience are not monotonic properties of random walks on the slab with respect to the partial order $\preceq$.
In this sense, random walks on the slab are similar to random walks on $\N^2$, and our construction of a positive recurrent random walk $X$ and a transient random walk $Y$ such that $Y\preceq X$ follows the same intuition.
For our random walk on $\N^2$, the positive $x$-axis moves outwards while the positive $y$-axis moves inwards and the primary change from $X$ to $Y$ is changing the center from predominantly moving towards the $y$-axis to predominantly moving towards the $x$-axis.
For our slab construction, we preserve the role of the positive $x$-axis, i.e., the lower boundary, and use the upper boundary in place of the positive $y$-axis.

However, our proof technique for the recurrent side is different.
Fix positive integer $k\geq2$, and consider the following random walk on the slab of thickness $k$:
\begin{align*}
    \begin{cases} \text{up} & \text{w.p. }0.65 \\ \text{right} & \text{w.p. }0.33 \\ \text{left} & \text{w.p. }0.01 \\ \text{down} & \text{w.p. }0.01, \end{cases} \qquad\qquad
    \begin{cases} \text{right} & \text{w.p. }0.52 \\ \text{up} & \text{w.p. }0.47 \\ \text{left} & \text{w.p. }0.01, \end{cases} \qquad\qquad
    \begin{cases} \text{down} & \text{w.p. }0.49 \\ \text{left} & \text{w.p. }0.48 \\ \text{right} & \text{w.p. }0.03, \end{cases} \\
    \begin{cases} \text{up} & \text{w.p. }0.97 \\ \text{right} & \text{w.p. }0.02 \\ \text{down} & \text{w.p. }0.01, \end{cases} \qquad\qquad
    \begin{cases} \text{right} & \text{w.p. }0.99 \\ \text{up} & \text{w.p. }0.01, \end{cases} \qquad\qquad
    \begin{cases} \text{down} & \text{w.p. }0.50 \\ \text{right} & \text{w.p. }0.50, \end{cases}
\end{align*}
for the center, lower boundary, upper boundary, left boundary, origin, and upper corner, respectively.
It is easy to see that this Markov chain is irreducible and 2-periodic.
To be explicit about the 2-periodicity, we partition the slab of thickness $k$ into the two sets $T_0=\{(i,j)\in\Z^2:i\geq0,0\leq j \leq k, i+j\text{ is even}\}$ and $T_1=\{(i,j)\in\Z^2:i\geq0,0\leq j \leq k, i+j\text{ is odd}\}$, where starting in a state in $T_0$ we must move to a state in $T_1$, and vice versa.
The precise argument to show positive recurrence depends on the parity of $k$.
We first assume $k$ is even, where the argument is slightly simpler.

We claim that it suffices to show that for any starting state in $T_0$ within $k$ distance of at least one of the two corners, i.e., the origin and upper corner, the expected hitting time of $\{(0,0),(0,k)\}$, meaning the expected time to reach either of the two corners, not including time 0 if starting at one of the two corners, is finite.
If this holds, then let $r<\infty$ be greater than or equal to all of these expected hitting times.
There exists some probability $p>0$ such that with at least probability $p$, we reach the upper corner after exactly $k$ steps, starting at either of the two corners.
It suffices to show that the expected return time of the upper corner is finite, but we can bound our expected return time by $\frac{r+k}{p}<\infty$, by considering each time we return to one of the two corners, which takes at most $r$ time in expectation, and then whether or not we reach the upper corner after $k$ more steps, which occurs with probability $p$; if not, we are at one of the starting states in the definition of $r$, and we try again, where each trial takes at most $r+k$ time in expectation and has success probability at least $p$.

Now, consider starting at a state in $T_0$ within $k$ distance of at least one of the two corners, and let our random walk be $X_t$.
Let $\tau$ be the hitting time of $\{(0,0),(0,k)\}$, so that we must show $\E[\tau]<\infty$.
However, modify $X_t$ such that if we return to either of the two corners, we stop, i.e., for all $t\geq\tau$, we have $X_t=X_\tau$.
Similar to the proof of transience in \cref{subsection:transient}, let $S_n=x_{2n}-y_{2n}$ where $X_{t}=(x_{t},y_{t})$.
As $X_{2n}\in T_0$ for all nonnegative integers $n$, it is straightforward, though slightly tedious, to verify that $\E[S_{n+1}-S_n|S_n]\leq-0.0892$ regardless of the value of $S_n\in T_0$, for $n\geq 1$.
This inequality is tight when $S_n$ is on the lower boundary.
Crucially, without the modification of $X_t$ such that we stop upon returning to either of the two corners, this inequality would be violated if $S_n\in\{(0,0),(0,k)\}$.
We have $X_0=(x_0,y_0)$ is fixed, with $S_n\geq -k$ for all $n$, so
\[ -k-x_0+y_0 \leq \lim_{n\to\infty} \E[S_n - S_0]. \]
As $\abs{S_1-S_0}\leq2$ always, we have 
\begin{align*}
    -k-x_0+y_0-2 &\leq \lim_{n\to\infty} \E[S_n - S_1] = \sum_{n=1}^\infty \E[S_{n+1}-S_n]
    \\ &\leq \sum_{n=1}^\infty -0.0892 \cdot \Pr(X_{2n}\not\in\{(0,0),(0,k)\})
    \\ &=-0.0892 \sum_{n=1}^\infty \Pr(\tau > 2n)
    = 0.0892 - 0.0446 \cdot \E[\tau],
\end{align*}
where the last equality holds because $\tau$ must be a positive even integer.
This bounds
\[ \E[\tau] \leq \frac{k+x_0-y_0+2}{0.0446}+2 < \infty. \]

The argument when $k$ is odd is nearly identical, with some adjustments due to the fact that $(0,k)$ lies in $T_1$ instead of $T_0$.
Let $U=\{(0,k),(1,0),(0,1)\}$.
Applying essentially the same argument as when $k$ is even, it suffices to show that for any starting state in $T_1$ within $k+1$ distance of at least one of the points in $U$, the expected hitting time of $U$ is finite.
If we let $r<\infty$ be an upper bound on these expected hitting times and let $p$ be a lower bound on the probability of reaching $(0,k)$ after exactly $k+1$ steps, starting at one of the three points in $U$, then we can bound our expected return time of the upper corner by $\frac{r+k+1}{p}<\infty$.

Now, consider starting at a state in $T_1$ within $k+1$ distance of at least one of the points in $U$, and let this random walk be $X_t$, modified so that if we return to $U$, we stop.
Let $\tau$ be the hitting of $U$.
Again letting $S_n=x_{2n}-y_{2n}$, we continue to have $\E[S_{n+1}-S_n|S_n]\leq-0.0892$ regardless of the value of $S_n\in T_1$, for $n\geq 1$.
Again, the modification to stop upon reaching $U$ is critical for this inequality to work, as the inequality would be violated if we moved from either of the corners, which cannot occur for $n\geq 1$ by the stopping rule, though it can occur for $n=0$ if we started in $U$.
The same argument as before yields $\E[\tau]\leq\frac{k+x_0-y_0+2}{0.0446}+2<\infty$, which completes the proof of positive recurrence.

Our proof technique for the transient random walk on the slab is identical to that on $\N^2$ in \cref{subsection:transient}.
Consider the following random walk on the slab of thickness $k$:
\begin{align*}
    \begin{cases} \text{up} & \text{w.p. }0.01 \\ \text{right} & \text{w.p. }0.32 \\ \text{left} & \text{w.p. }0.02 \\ \text{down} & \text{w.p. }0.65, \end{cases} \qquad\qquad
    \begin{cases} \text{right} & \text{w.p. }0.51 \\ \text{up} & \text{w.p. }0.46 \\ \text{left} & \text{w.p. }0.03, \end{cases} \qquad\qquad
    \begin{cases} \text{down} & \text{w.p. }0.50 \\ \text{left} & \text{w.p. }0.49 \\ \text{right} & \text{w.p. }0.01, \end{cases} \\
    \begin{cases} \text{up} & \text{w.p. }0.01 \\ \text{right} & \text{w.p. }0.01 \\ \text{down} & \text{w.p. }0.98, \end{cases} \qquad\qquad
    \begin{cases} \text{right} & \text{w.p. }0.99 \\ \text{up} & \text{w.p. }0.01, \end{cases} \qquad\qquad
    \begin{cases} \text{down} & \text{w.p. }0.51 \\ \text{right} & \text{w.p. }0.49, \end{cases}
\end{align*}
for the center, lower boundary, upper boundary, left boundary, origin, and upper corner, respectively.
Again, this Markov chain is irreducible and 2-periodic, and strictly satisfies the desired partial order relationship with the positive recurrent Markov chain in the sense that the left and down probabilities strictly increased while the right and up probabilities strictly decreased, whenever they are allowed to be positive.
We use the same notation of $S_n$, $A_n$, and $B_n$ from \cref{subsection:transient}, where $B_n$ has marginal distribution
\[ B_n = \begin{cases} -2 & \text{w.p. }0.2401 \\ 0 & \text{w.p. }0.4998 \\ 2 & \text{w.p. }0.2601. \end{cases} \]
The $-2$ probability is tight when on the upper boundary, and the $2$ probability is tight when on the lower boundary.
As $\E[B_k]>0$, our Hoeffding argument from \cref{subsection:transient} implies this random walk is transient.

\subsection{Monotonicity in Homogeneous Random Walks}\label{subsection:slab_homogeneity}
As the slab has finite thickness, there is no reason to believe that increasing the down probabilities will increase the chance that the random walk is recurrent; by reflecting across $y=k/2$, the up and down probabilities play identical roles.
And as seen in \cref{subsection:slab_counterexample}, increasing the down probabilities can allow for counterexamples of monotonicity.
To this end, we introduce a more natural definition for a partial order on random walks on the slab.
\begin{definition}\label{definition:slab_partial_order}
    For two random walks $X$ and $Y$ on the slab of thickness $k$, say $X\trianglelefteq Y$, or equivalently $Y\trianglerighteq X$, if for each state, the probability of going left is weakly greater in $X$ than in $Y$, the probability of going right is weakly greater in $Y$ than in $X$, the probability of going up is the same in $X$ as in $Y$, and the probability of going down is the same in $X$ as in $Y$.
\end{definition}
Intuitively, $X\trianglelefteq Y$ means $X$ goes left more than $Y$.
Similar to \cref{definition:inward_homogeneous}, we introduce a definition of homogeneity on the slab such that monotonicity holds.
\begin{definition}\label{definition:slab_homogeneous}
    Say a random walk on the slab is \textit{homogeneous} if it is of the following form:
    {\allowdisplaybreaks
    \begin{align*}
        \begin{cases} \text{left} & \text{w.p. }\ell_q \\ \text{right} & \text{w.p. }r_q \\ \text{up} & \text{w.p. }u_q \\ \text{down} & \text{w.p. }d_q, \end{cases} \qquad\qquad
        \begin{cases} \text{left} & \text{w.p. }\ell_x \\ \text{right} & \text{w.p. }r_x \\ \text{up} & \text{w.p. }u_x, \end{cases} \qquad\qquad
        \begin{cases} \text{left} & \text{w.p. }\ell_u \\ \text{right} & \text{w.p. }r_u \\ \text{down} & \text{w.p. }d_u, \end{cases} \\
        \begin{cases} \text{down} & \text{w.p. }d_y \\ \text{up} & \text{w.p. }u_y \\ \text{right} & \text{w.p. }r_y, \end{cases} \qquad\qquad
        \begin{cases} \text{right} & \text{w.p. }r_o \\ \text{up} & \text{w.p. }u_o, \end{cases} \qquad\qquad
        \begin{cases} \text{right} & \text{w.p. }r_c \\ \text{down} & \text{w.p. }d_c, \end{cases}
    \end{align*}}
for the center, lower boundary, upper boundary, left boundary, origin, and upper corner, respectively, where
\begin{align*}
    r_c \geq r_u \geq r_q \leq r_x \leq r_o, \quad r_y \geq r_q, \quad \ell_u \geq \ell_q \leq \ell_x, \quad d_u, d_y, d_c \geq d_q, \quad\text{and}\quad u_x, u_y, u_o \geq u_q.
\end{align*}
\end{definition}
Similar to an inward-homogeneous random walk on $\N^2$, we can view a homogeneous random walk on the slab as being given by the four transition probabilities from the general center case, along with redirection rules if a forbidden move is selected.
Within this redirection framework, the only nontrivial inequalities are $r_c \geq r_u$ and $r_o \geq r_x$, which can be interpreted as the existence of a coupling of the upper corner and boundary redirection rules such that if both attempt to move up and the upper boundary redirects to the right, then the upper corner must redirect to the right as well, and symmetrically for the origin and lower boundary.

We now show that under a homogeneity condition, recurrence and positive recurrence are monotonic properties with respect to $\trianglelefteq$.
\begin{theorem}\label{theorem:slab_recurrence}
    For positive integer $k\geq2$, if an irreducible homogeneous random walk $X$ on the slab of thickness $k$ is recurrent (respectively, positive recurrent), then any irreducible random walk $Y$ on the slab of thickness $k$ such that $Y\trianglelefteq X$ is also recurrent (respectively, positive recurrent).
\end{theorem}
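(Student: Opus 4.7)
The plan is to prove the theorem by a coupling argument closely modeled on the proof of \cref{theorem:monotonicity_recurrence}, exploiting the feature that the partial order $\trianglelefteq$ preserves the up and down transition probabilities at every state and alters only the left and right probabilities. This alignment of vertical moves should allow $X$ and $Y$ to be coupled so that their $y$-coordinates essentially track one another while $Y$'s $x$-coordinate remains bounded above by $X$'s up to a small deterministic discrepancy.

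Starting both chains at the origin, I would construct a three-stage coupling analogous to the one in \cref{section:homogeneous}. In stage 1, $X$ and $Y$ share a single selection of a direction in $\{L,R,U,D\}$ sampled from the common interior law $(\ell_q, r_q, u_q, d_q)$ prescribed by the homogeneity of $X$. In stage 2, each chain redirects according to the homogeneous redirection rule for its current region among the six regions of the slab, with the redirection randomness shared whenever both chains lie in the same region. In stage 3, $Y$ performs a possible left-shift, converting some rightward moves into leftward moves to realize its higher $\ell$- and lower $r$-probability at $Y_t$ as permitted by $Y \trianglelefteq X$; note that on the left boundary, origin, and upper corner the left/right probabilities of $Y$ must already equal those of $X$ by $\trianglelefteq$ (since $p^\ell_Y = 0$ is forced), so this shift is nontrivial only in the center and the upper and lower boundaries.

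I would then prove by induction on $t$ that under this coupling $Y_t - X_t$ lies in a fixed finite set $S$ of small displacements, analogous to $\{(2,0),(1,1),(0,2)\}$ in \cref{theorem:monotonicity_recurrence}, with the $y$-component of the discrepancy typically zero thanks to the synchronization of vertical moves. The induction splits along the direction selected in stage 1 together with the pair of regions occupied by $X$ and $Y$; the easy cases are those in which both chains lie in the same region, since there stages 1 and 2 produce identical moves and any discrepancy evolves only through stage 3. The main obstacle is the mixed-region configurations --- $X$ on the lower boundary with $Y$ at the origin, $X$ on the upper boundary with $Y$ at the upper corner, or $X$ in the center with $Y$ on the left boundary --- where the inequalities $r_o \ge r_x$, $r_c \ge r_u$, $\ell_u, \ell_x \ge \ell_q$, and $u_x, u_y, u_o, d_u, d_y, d_c \ge u_q, d_q$ from \cref{definition:slab_homogeneous} must be invoked to couple the region-dependent redirections coherently. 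The slab's bounded vertical extent and the presence of six boundary regions (rather than four as in $\N^2$) make this case analysis more delicate than in \cref{theorem:monotonicity_recurrence}, and the precise shape of $S$ must be chosen to absorb the small mismatches that appear when the stage 2 redirections disagree.

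Finally, the conclusion follows as in \cref{section:homogeneous}. If $X$ is recurrent (respectively, positive recurrent) then $X$ visits the origin infinitely often (respectively, in finite expected return time), and at each such visit $Y_t$ lies in the bounded neighborhood of the origin indexed by $S$. By irreducibility of $Y$ together with the near-ellipticity forced around the origin in the spirit of \cref{remark:monotonicity_recurrence_irreducible}, from any state of $S$ the walk $Y$ reaches $(0,0)$ within a uniformly bounded number of steps with uniformly positive probability, which yields (positive) recurrence of $Y$.
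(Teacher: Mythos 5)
Your high-level strategy (shared stage-1 direction, region-dependent redirection in stage 2 with shared randomness when the regions coincide, a left-shift in stage 3, then induction on $t$) matches the paper's. But the invariant you propose to carry through the induction --- that $Y_t - X_t$ lies in a fixed finite set $S$ of small displacements ``analogous to $\{(2,0),(1,1),(0,2)\}$'', with the $y$-component of the discrepancy typically zero --- is not maintainable, and this is where the real content of the proof lies. Two problems. First, the vertical moves do \emph{not} stay synchronized: once $X$ and $Y$ occupy different regions (e.g.\ $X$ on the lower boundary and $Y$ at the origin, a configuration reachable after a single stage-3 left shift), a stage-1 choice of ``down'' is redirected by the two chains under different rules, and one may go up while the other goes right; iterating, the $y$-coordinates can drift apart by as much as $k$. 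Second, the $x$-discrepancy is genuinely one-sided: when both chains sit on the lower boundary their redirections are shared, so stage-3 shifts accumulate and $i^{(1)}_t - i^{(2)}_t$ can grow without bound. Hence no finite set $S$ can contain $Y_t - X_t$, and no amount of tuning $S$ fixes this.

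The paper's proof instead propagates the one-sided, $k$-dependent potential
\[
\bigl\lvert j^{(2)}_t - j^{(1)}_t \bigr\rvert + \max\bigl\{ i^{(2)}_t - i^{(1)}_t,\, 0 \bigr\} \;\leq\; 2\lceil k/2\rceil ,
\]
which still suffices for the conclusion (when $X_t=(0,0)$ it confines $Y_t$ to a bounded neighborhood of the origin), but whose inductive verification is a different computation from the one you sketch: the budget $2\lceil k/2\rceil$ is shared between the $y$-mismatch and the $x$-overshoot, and the hardest case (both chains redirecting a leftward move on the left boundary, sitting at the two corners when $k$ is even) consumes the entire budget and needs the $2$-periodicity parity argument to close. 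Your instinct to couple the corner and boundary redirections via $r_c\ge r_u$ and $r_o\ge r_x$ is correct and is exactly how the paper handles the mixed corner/boundary cases, but without replacing the finite-set invariant by a potential of the above form the induction cannot be completed.
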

\begin{proof}
    Our proof is structurally similar to that of \cref{theorem:monotonicity_recurrence}, so for sake of brevity and clarity we will omit additional elaboration when the reasoning is essentially identical to before.
    It suffices to show that we can couple $X$ and $Y$ such that $\abs{j_t^{(2)}-j_t^{(1)}}+\max\{i_t^{(2)}-i_t^{(1)},0\}\leq 2\ceil{k/2}$ for all $t$.
    We couple $X$ and $Y$ in the same manner as before, where inward shifting only ever changes a right movement in $Y$ to a left movement.
    We couple the upper corner and boundary redirection rules as described in \cref{definition:slab_homogeneous}, which implies that if one of $X$ and $Y$ is on the upper boundary and the other is at the upper corner, then ignoring inward shifting, if they (both) attempt to move up and the one on the upper boundary redirects right, the one at the upper corner will redirect right as well.
    We similarly couple the origin and lower boundary redirection rules.

    We use the same notation of $Y_t'=(i_t^{(2')},j_t^{(2')})$ as before.
    It suffices to show that for fixed $t\geq1$, if $\abs{j_{t-1}^{(2)}-j_{t-1}^{(1)}}+\max\{i_{t-1}^{(2)}-i_{t-1}^{(1)},0\}\leq2\ceil{k/2}$, then $\abs{j_t^{(2')}-j_t^{(1)}}+\max\{i_t^{(2')}-i_t^{(1)},0\}\leq2\ceil{k/2}$, as inward shifting weakly decreases the second term.
    If neither $X$ nor $Y$ redirect, the desired inequality trivially holds by induction.
    Now, suppose there is exactly one redirection; we divide into casework on who redirects and the direction that the random walks attempted to go in.

    \noindent\textbf{Case 1}: $X$ redirects trying to go up.
    Then $Y$ goes up, decreasing the first term by 1 from time $t-1$ to $t$, so the inequality holds regardless of how $X$ redirects, as its movement can only change one term by at most 1.

    \noindent\textbf{Case 2}: $X$ redirects trying to go down.
    This case is symmetrically equivalent to case 1.

    \noindent\textbf{Case 3}: $X$ redirects trying to go left.
    Then $Y$ goes left, decreasing the second term by 1, so the inequality holds regardless of how $X$ redirects.

    \noindent\textbf{Case 4}: $Y$ redirects trying to go up.
    This case follows identically to case 1.

    \noindent\textbf{Case 5}: $Y$ redirects trying to go down.
    This case is symmetrically equivalent to case 4.

    \noindent\textbf{Case 6}: $Y$ redirects trying to go left.
    This implies $i_t^{(2')}-i_t^{(1)}\leq0$, so we trivially bound $\abs{j_t^{(2')}-j_t^{(1)}}+\max\{i_t^{(2')}-i_t^{(1)},0\}\leq k + 0 \leq 2\ceil{k/2}$.

    Finally, suppose there are two different redirections, meaning $X$ and $Y$ are in two different non-central regions; we divide into casework on the direction that the random walks attempted to go in.
    Note that we can ignore any cases where the redirection rules pick the same direction, as then the two terms are unchanged.

    \noindent\textbf{Case 1}: $X$ and $Y$ redirect trying to go up.
    If $X$ is to the right, by 2-periodicity we bound the first term by 1 and the second term by 0.
    If $Y$ is to the right, meaning $X$ is at the upper corner, the situation is more complicated.
    If $Y$ redirects down, then (ignoring $X$ moving down as then their movements will be the same) $X$ moves right, so the first term increases by 1 while the second term decreases by 1, so the inequality holds by induction.
    If $Y$ redirects left then the second term decreases by 1, so the inequality holds regardless of how $X$ redirects.
    If $Y$ redirects right, our coupling implies $X$ redirects right, which implies the inequality continues to hold.

    \noindent\textbf{Case 2}: $X$ and $Y$ redirect trying to go down.
    This case follows symmetrically to case 1.

    \noindent\textbf{Case 3}: $X$ and $Y$ redirect trying to go left.
    As $X$ and $Y$ are both on the $y$-axis, by 2-periodicity they are an even distance apart.
    We have $\max\{i_{t-1}^{(2)}-i_{t-1}^{(1)},0\}=0$.
    If $k$ is odd, or if $k$ is even and they are at most $k-2$ apart, then $\abs{j_{t-1}^{(2)}-j_{t-1}^{(1)}}\leq 2\ceil{k/2}-2$ and the sum of the two terms can increase by at most 2, yielding the desired inequality.
    Otherwise, $X$ and $Y$ are on the two corners.
    If either moves vertically then the first term decreases by 1 so the inequality holds regardless of how the other redirects; otherwise, they both move right, in which case the inequality continues to hold.
\end{proof}
We also show that transience is a monotonic property with respect to $\trianglelefteq$, assuming homogeneity on one side.
\begin{theorem}\label{theorem:slab_transience}
    For positive integer $k\geq2$, if an irreducible homogeneous random walk $X$ on the slab of thickness $k$ is transient, then any irreducible random walk $Y$ on the slab of thickness $k$ such that $Y\trianglerighteq X$ is also transient.
\end{theorem}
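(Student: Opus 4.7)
The plan is to mirror the coupling argument of \cref{theorem:slab_recurrence} in exactly the way that \cref{theorem:monotonicity_transience} mirrors \cref{theorem:monotonicity_recurrence}. Since $Y \trianglerighteq X$ means $Y$ tends to go right more than $X$, I would replace the third-stage ``inward shifting'' (which occasionally converts a right-move of $Y$ into a left-move) by an ``outward shifting'' step that occasionally converts a left-move of $Y$ into a right-move; the first two stages---picking a direction according to the homogeneous central probabilities $\ell_q,r_q,u_q,d_q$ of $X$ and then applying the redirection rules with the joint coupling of upper-corner/upper-boundary and origin/lower-boundary rules afforded by \cref{definition:slab_homogeneous}---would be left unchanged.

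The invariant I aim to maintain inductively, using the notation $X_t=(i_t^{(1)},j_t^{(1)})$ and $Y_t=(i_t^{(2)},j_t^{(2)})$ from the proof of \cref{theorem:slab_recurrence}, is
\[
\abs{j_t^{(2)}-j_t^{(1)}}+\max\bigl\{i_t^{(1)}-i_t^{(2)},0\bigr\}\leq 2\ceil{k/2},
\]
obtained from the invariant of \cref{theorem:slab_recurrence} by swapping the roles of $X$ and $Y$ in the horizontal coordinate. Once this is established, transience of $Y$ is immediate: if $Y$ were recurrent then by irreducibility $Y$ would return to $(0,0)$ infinitely often almost surely, which under the coupling would force $X$ to visit the finite set $\{(i,j):0\leq i\leq 2\ceil{k/2},\, 0\leq j\leq k\}$ infinitely often, contradicting the transience of $X$. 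Hence $Y$ must be transient.

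The remaining work is the inductive case analysis, which parallels that of \cref{theorem:slab_recurrence} with the roles of left and right, and of $X$ and $Y$, interchanged in the appropriate places. The six single-redirection cases each reduce either to a trivial estimate---any outward shift that actually fires weakly decreases $i_t^{(1)}-i_t^{(2)}$---or to their counterpart in the recurrence proof under this reflection. The step I expect to be the main obstacle is the double-redirection case in which $X$ and $Y$ both attempt to move left while on the $y$-axis, since \cref{definition:slab_homogeneous} couples the left-boundary redirections to those at the two corners only indirectly through the separate origin/lower-boundary and upper-corner/upper-boundary couplings; as in the recurrence proof one would use 2-periodicity to force an even vertical gap, giving a slack of $2$ in the invariant except when $X$ and $Y$ occupy the two corners, in which edge case a direct check of the joint redirection outcomes preserves the invariant.
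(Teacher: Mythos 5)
Your proposal is correct and follows essentially the same route as the paper: the paper's proof likewise reuses the coupling of \cref{theorem:slab_recurrence} with inward shifting replaced by outward shifting as in \cref{theorem:monotonicity_transience}, maintains the same invariant $\abs{j_t^{(1)}-j_t^{(2)}}+\max\{i_t^{(1)}-i_t^{(2)},0\}\leq 2\ceil{k/2}$, and verifies it by the identical casework with the roles of $X$ and $Y$ swapped.
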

\begin{proof}
    Following the reasoning of the proof of \cref{theorem:monotonicity_transience} and using the notation of the proof of \cref{theorem:slab_recurrence}, it suffices to couple $X$ and $Y$ such that $\abs{j_{t}^{(1)}-j_{t}^{(2)}}+\max\{i_t^{(1)}-i_t^{(2)},0\}\leq 2\ceil{k/2}$ for all $t$.
    The coupling used is the same as that in the proof of \cref{theorem:slab_recurrence}, where inward shifting is replaced by outward shifting as in the proof of \cref{theorem:monotonicity_transience}.
    The inductive verification that the coupling satisfies the desired inequality is identical to that in the proof of \cref{theorem:slab_recurrence}, where in all the casework we swap the roles of $X$ and $Y$. 
\end{proof}
Analogous to \cref{corollary:sandwich_monotonicity}, the following corollary of \cref{theorem:slab_recurrence,theorem:slab_transience} allows for application of monotonicity when neither random walk is homogeneous, though a homogeneous random walk must exist between the two random walks.
\begin{corollary}\label{corollary:slab_sandwich}
    For positive integer $k\geq2$, let $X$, $Y$, and $Z$ be irreducible random walks on the slab of thickness $k$ where $Y$ is homogeneous and $X\trianglelefteq Y\trianglelefteq Z$.
    If $Z$ is recurrent then $X$ is recurrent, and if $X$ is transient then $Z$ is transient.
\end{corollary}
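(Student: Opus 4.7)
The plan is to derive \cref{corollary:slab_sandwich} as an immediate two-step consequence of \cref{theorem:slab_recurrence,theorem:slab_transience}, exactly analogous to how \cref{corollary:sandwich_monotonicity} follows from \cref{theorem:monotonicity_recurrence,theorem:monotonicity_transience}. The key observation is that while each of the two slab theorems requires the \emph{homogeneous} walk to sit on a specific side of the hypothesis, one can combine each theorem with the contrapositive of the other to push information through the homogeneous walk $Y$ in the middle.

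For the first implication, suppose $Z$ is recurrent. I would first apply the contrapositive of \cref{theorem:slab_transience} to the pair $Y \trianglelefteq Z$: since $Y$ is homogeneous and irreducible, if $Y$ were transient then $Z \trianglerighteq Y$ would be transient as well, contradicting recurrence of $Z$; hence $Y$ is recurrent. Now \cref{theorem:slab_recurrence} applies directly to the pair $X \trianglelefteq Y$ with $Y$ homogeneous and recurrent, yielding that $X$ is recurrent. The same chain works verbatim for positive recurrence, though the corollary only states the recurrence version.

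For the second implication, suppose $X$ is transient. I would apply the contrapositive of \cref{theorem:slab_recurrence} to the pair $X \trianglelefteq Y$: since $Y$ is homogeneous and irreducible, if $Y$ were recurrent then $X \trianglelefteq Y$ would be recurrent, contradicting transience of $X$; hence $Y$ is transient. Then \cref{theorem:slab_transience} applied to $Y \trianglelefteq Z$ with $Y$ homogeneous and transient gives that $Z$ is transient.

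There is essentially no obstacle here: the entire content of the corollary is absorbed into the two slab theorems, and the only thing one must verify is that the partial order $\trianglelefteq$ is oriented consistently in both applications, which is immediate from \cref{definition:slab_partial_order}. If anything requires a comment, it is only that the irreducibility hypothesis on $X$, $Y$, and $Z$ is exactly what is needed to invoke both slab theorems at each step.
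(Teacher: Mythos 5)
Your proposal is correct and follows exactly the route the paper intends: \cref{corollary:slab_sandwich} is obtained from \cref{theorem:slab_recurrence,theorem:slab_transience} together with their contrapositives, passing through the homogeneous walk $Y$ in the middle, just as \cref{corollary:sandwich_monotonicity} is derived from \cref{theorem:monotonicity_recurrence,theorem:monotonicity_transience}. The only implicit step, that an irreducible chain is recurrent if and only if it is not transient, is standard and unproblematic.
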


\section{Random Walks on a Tree}\label{sec:trees}
Let $X$ and $Y$ denote two random walks on the same undirected tree $T=(V(T),E(T))$ rooted at $r$, and let $P$ and $Q$ denote their transition matrices.
For any vertex $v\neq r$, let $\pa(v)$ denote its parent, i.e., the next vertex on the unique path from $v$ to $r$.
We now formally define the two partial orders that we introduced in \cref{section:introduction}, which we denote by $\trianglelefteq$ and $\preceq$, respectively.
\begin{definition}
    We say $X\trianglelefteq Y$ if for all edges $(u,v)\in E(T)$, where say $u=\pa(v)$, we have $P(v,u)\geq Q(v,u)$.

    We say $X\preceq Y$ if for all edges $(u,v)\in E(T)$ with $u=\pa(v)$, we have $P(v,u)\geq Q(v,u)$ and $P(u,v) \leq Q(u,v)$.
\end{definition}
Note that because the outgoing transition probabilities from any state sum to 1, the second condition of $X\preceq Y$ implies the first condition.

\begin{remark}\label{remark:trees_classification}
    A classification of recurrence for random walks on trees, using the notion of logarithmic capacity, is known; see, for example, \cite[Theorem 6.13]{Woess}.
    However, this characterization is quite involved and technical, and it is not immediately clear what this result says concerning monotonicity of recurrence with respect to $\trianglelefteq$ and $\preceq$.
    To the best of our knowledge, our two results on monotonicity of recurrence for trees, \cref{theorem:tree_counterexample,theorem:tree_monotonicity}, are new.
\end{remark}

We first provide a counterexample to monotonicity of recurrence with respect to $\trianglelefteq$.
\begin{theorem}\label{theorem:tree_counterexample}
    There exists two random walks $X\trianglelefteq Y$ on the same tree $T$ such that $Y$ is positive recurrent and $X$ is transient.
    Moreover $X$ and $Y$ are elliptic, meaning all possible transitions have positive probabilities, and the defining inequalities for $X \trianglelefteq Y$ all strictly hold.
\end{theorem}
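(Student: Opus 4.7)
The plan is to exploit that $\trianglelefteq$ controls only the parent-directed transition probabilities and places no constraint on how each walk splits its child-directed mass among the children. I will build a comb-shaped tree $T$ on which $X$ funnels its child mass along a single escape route while $Y$ spends most of its child mass on side excursions that are themselves biased back toward the spine. Transience of $X$ and positive recurrence of $Y$ will then be read off from the walks induced on the spine.

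Specifically, let $T$ have root $r = v_0$, an infinite spine $v_0, v_1, v_2, \ldots$, and, for each $n \ge 1$, an infinite side ray $s_n^{(0)}, s_n^{(1)}, \ldots$ attached to $v_n$ with $s_n^{(0)}$ a child of $v_n$. At each $v_n$ with $n \ge 1$ let $X$ assign probabilities $(0.45,\, 0.5,\, 0.05)$ and $Y$ assign $(0.4,\, 0.3,\, 0.3)$ to (parent, next spine vertex, entrance of side ray). At every side-ray vertex let $X$ have parent probability $0.6$ and child probability $0.4$, and $Y$ have $0.55$ and $0.45$. Set $P_X(r,v_1) = P_Y(r,v_1) = 1$. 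All probabilities are positive, and the parent probability of $X$ strictly exceeds that of $Y$ at every non-root vertex, so both walks are elliptic and irreducible and $X \trianglelefteq Y$ with every defining inequality strict.

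To verify the recurrence behaviour, observe that on each side ray the walk is a nearest-neighbour walk on $\N$ with drift strictly toward $v_n$, so it returns to $v_n$ almost surely and in finite expected time. Collapsing each side excursion to a single sojourn, the chain traced by the spine visits is a nearest-neighbour walk on $\N$ whose probability of moving toward the root at $v_n$ equals $p/(p+q)$, where $p,q$ are the parent and next-spine probabilities at $v_n$; this is $4/7 > 1/2$ for $Y$ and $9/19 < 1/2$ for $X$. Equivalently, in the reversible-network representation the edge conductances along the spine decay geometrically under $Y$, so $\sum_v c_v < \infty$ and $Y$ is positive recurrent, whereas under $X$ they grow geometrically, giving finite effective resistance from $r$ to infinity along the spine and hence transience.

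The only mildly delicate step is transferring positive recurrence of the induced spine chain back to positive recurrence of the full walk for $Y$: one additionally needs each side-ray excursion to have finite expected duration, which follows at once from the fact that each side walk has drift bounded away from zero toward its root. Transience of $X$ transfers for free, since once its induced spine chain escapes to infinity, so has the full walk on $T$. Ellipticity, irreducibility, and strictness of the $\trianglelefteq$ inequalities all reduce to positivity of the listed numbers and the inequalities $0.45 > 0.4$ and $0.6 > 0.55$.
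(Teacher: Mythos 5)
Your proposal is correct, and it is essentially the paper's argument: both exploit that $\trianglelefteq$ leaves the split of non-parent mass among children unconstrained, build a tree with a distinguished escape ray (your comb spine, the paper's ray $L^*$ in the binary tree), let $X$ funnel child mass along that ray while $Y$ diverts it into excursions with drift back toward the ray, and then read off transience/positive recurrence from the induced nearest-neighbour walk on the ray together with finiteness of the excursion times. The differences (comb versus binary tree, explicit constants versus a parameter $\delta$, the optional electrical-network phrasing) are cosmetic.
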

\begin{proof}
    Let $T$ be the binary tree, whose vertices we denote by $V(T)=\{L,R\}^*$, the set of binary strings on alphabet $\{L,R\}$, where the root is the empty string $\varepsilon$ and the parent of a vertex $v$ is obtained by removing its last character.
    We visually orient the tree so that the root is at the top, and the two children go down and to the left or down and to the right, depending on whether we added an $L$ or $R$, respectively.
    
    Let $P(\varepsilon,L)=Q(\varepsilon,L)=P(\varepsilon,R)=Q(\varepsilon,R)=\frac{1}{2}$.
    Fix $0<\delta<\frac{1}{7}$.
    We first construct $Y$ and show it is positive recurrent.
    Let all states in $L^*\setminus\{\varepsilon\}$, the non-empty strings consisting only of $L$, have outgoing transition probabilities of $2\delta$ going up, $\delta$ going left, and $1-3\delta$ going right.
    Let the rest of the vertices have outgoing probabilities of $1-2\delta$ going up, $\delta$ going left, and $\delta$ going right.
    Consider a random walk starting at $\varepsilon$.
    Every time we move right from a state in $L^*$, note that both downwards moves are equivalent, and because for all steps until we return to $L^*$, we have probability $1-2\delta>\frac{1}{2}$ probability of going up and $2\delta<\frac{1}{2}$ probability of going down, so with finite expected time we return to $L^*$ at the state we originally departed $L^*$ from.
    So now consider the infinite subsequence of this Markov chain that is obtained by restricting to state space $L^*$, i.e., if we move right we skip ahead until we return to $L^*$.
    Because each venture away from $L^*$ takes finite expected time to return, it suffices to show the expected number of steps for this subsequence to return to $\varepsilon$ is finite.
    This subsequence is a lazy Markov chain where the root goes left/down with probability $\frac{1}{2}$ and stays with probability $\frac{1}{2}$, and for all other states goes left/down with probability $\delta$, stays with probability $1-3\delta$, and goes up/right with probability $2\delta$.
    As $2\delta>\delta$, so we have mean drift towards the root, this Markov chain returns to the root in finite expected time, and thus $Y$ is positive recurrent.

    Now we construct $X$ and show it is transient.
    Let all states in $L^*\setminus\{\varepsilon\}$ have outgoing transition probabilities of $3\delta$ going up, $1-4\delta$ going left, and $\delta$ going right.
    Let the rest of the non-root vertices have outgoing probabilities of $1-\delta$ going up, $\delta/2$ going left, and $\delta/2$ going right.
    Note that $X\trianglelefteq Y$, with these inequalities strictly holding, and that both $X$ and $Y$ are elliptic.
    Similar to before, every time we move right from a state in $L^*$, with finite expected time we return to $L^*$, at the state we originally departed from.
    So it suffices to show that the infinite subsequence of this Markov chain that is obtained by restricting to $L^*$, which is again a lazy Markov chain, is transient.
    This Markov chain, for all non-root states, goes left/down with probability $1-4\delta$ and up with probability $3\delta$, so because $\delta<\frac{1}{7}$ it has mean drift away from the root, which means it is transient.
    This completes the proof.
\end{proof}
Next, we prove that monotonicity of recurrence holds on trees with respect to $\preceq$.
We also show monotonicity of positive recurrence, including the refinement that considers the expected return time.
\begin{theorem}\label{theorem:tree_monotonicity}
    Let $X$ and $Y$ be two irreducible random walks on the same tree $T$, rooted at $r$, such that $X\preceq Y$.
    Then if $Y$ is recurrent, so is $X$.
    Moreover, if $Y$ is positive recurrent, so that the expected return time $\E_{Y}[\tau_r]$ of $r$ is finite, then $X$ is positive recurrent, and $\E_{X}[\tau_r]\leq\E_{Y}[\tau_r]$.
\end{theorem}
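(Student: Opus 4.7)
The plan is to apply Rayleigh's monotonicity principle, as suggested in \cref{remark:Rayleigh}. The starting observation is that every nearest-neighbor random walk on a tree is reversible. Indeed, for each vertex $v\neq r$ with unique path $r=v_0,v_1,\ldots,v_k=v$ from the root, define
\[ \mu^X(v) = \prod_{i=0}^{k-1} \frac{P(v_i,v_{i+1})}{P(v_{i+1},v_i)}, \qquad \mu^Y(v) = \prod_{i=0}^{k-1} \frac{Q(v_i,v_{i+1})}{Q(v_{i+1},v_i)}, \]
together with $\mu^X(r)=\mu^Y(r)=1$. These unnormalized measures satisfy detailed balance, so $X$ and $Y$ correspond to electric networks on $T$ with conductances $c^X(v_i,v_{i+1})=\mu^X(v_i)P(v_i,v_{i+1})$ and analogously $c^Y$.

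The hypothesis $X\preceq Y$ says precisely that in every factor of the products above, $P(v_i,v_{i+1})/P(v_{i+1},v_i)\leq Q(v_i,v_{i+1})/Q(v_{i+1},v_i)$, so $\mu^X(v)\leq\mu^Y(v)$ for every $v\in V(T)$. Combining this with $P(u,v)\leq Q(u,v)$ on the parent edge $(u,v)=(\pa(v),v)$ yields the pointwise edge comparison $c^X(e)\leq c^Y(e)$ for all $e\in E(T)$.

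By Rayleigh's monotonicity law (see \cite{DoyleSnell}), decreasing edge conductances can only increase the effective resistance from $r$ to infinity, so $R_{\mathrm{eff}}^{X}(r\leftrightarrow\infty)\geq R_{\mathrm{eff}}^{Y}(r\leftrightarrow\infty)$. Since a reversible irreducible random walk on a locally finite graph is recurrent iff this effective resistance is infinite, recurrence of $Y$ forces recurrence of $X$. For the positive recurrent case, recall $\E[\tau_r]=1/\pi(r)$ for the stationary probability distribution $\pi$; with the normalization $\mu^X(r)=\mu^Y(r)=1$ we obtain $\E_X[\tau_r]=\sum_v\mu^X(v)$ and $\E_Y[\tau_r]=\sum_v\mu^Y(v)$, so the pointwise inequality $\mu^X(v)\leq\mu^Y(v)$ immediately gives both positive recurrence of $X$ and the quantitative bound $\E_X[\tau_r]\leq\E_Y[\tau_r]$.

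The argument is essentially bookkeeping once the reversibility setup is in place; the only mild subtlety is checking that the infinite-network formulation of Rayleigh's principle applies, but this is standard for locally finite networks, and on a tree the effective resistance from $r$ to infinity can be computed as a direct limit of effective resistances on finite subtrees where pointwise conductance comparison applies immediately. I do not expect any genuine obstacle.
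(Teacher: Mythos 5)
Your proposal is correct and follows essentially the same route as the paper: both arguments exploit that a nearest-neighbor walk on a tree is automatically reversible, build the conductances along root-to-vertex paths (your explicit product formula for $\mu^X$ is exactly the paper's level-by-level recursion, with the same normalization at the root), deduce the edgewise comparison $c^X(e)\leq c^Y(e)$ from $X\preceq Y$, and then invoke Rayleigh monotonicity for recurrence and the identity $\E[\tau_r]=\sum_v c_v/c_r$ for the positive-recurrence bound. No gaps.
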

\begin{proof}
    Our proof will use standard theory about the electrical network interpretation of (reversible) Markov chains; we refer readers to \cite{LyonsPeres} for a comprehensive overview.
    Recall each undirected edge $(x,y)$ is associated a conductance $c_{xy}\geq0$, where $c_x=\sum_y c_{xy}$ is the total conductance incident to a state $x$.
    In particular, we will use the fact that if $\sum_x c_x < \infty$, then the Markov chain is positive recurrent with stationary distribution $\pi$ given by $\pi_x \propto c_x$, and the expected return time of $x$ is $\frac{1}{\pi_x}\propto\frac{1}{c_x}$.
    We will also use the notion of effective conductance between a state and a set of states, as well as the characterization of recurrence, which when applied to our setting of a tree, means that a Markov chain is recurrent if and only if the effective conductance $c_n$ between the root and the level-$n$ vertices goes to 0 as $n\to\infty$.

    As $X\preceq Y$, the transition probabilities from the root are the same in $X$ and $Y$, so because the conductances of an electrical network corresponding to a given Markov chain are unique up to scaling, we can assign the same conductances $c_{rv}^{(X)}=c_{rv}^{(Y)}$ for each child $v$ of the root $r$, i.e., level-1 vertices $v$, for both the electrical networks corresponding to $X$ and $Y$.
    We claim that the conductances for the entire tree $T$, which we denote by $c_{vw}^{(X)}$ and $c_{vw}^{(Y)}$ for edge $(v,w)$ for $X$ and $Y$, respectively, satisfy $c_{vw}^{(X)}\leq c_{vw}^{(Y)}$ for all edges $(v,w)$.
    Note that for any level-1 vertex $v$ and a child $w$ of $v$, we have $c_{vw}^{(X)}=c_{rv}^{(X)}P(v,w)/P(v,r)$ and similarly for $Y$, where because $X\preceq Y$ we have $P(v,w)/P(v,r) \leq Q(v,w)/Q(v,r)$, and thus $c_{vw}^{(X)}\leq c_{vw}^{(Y)}$.
    Continuing this argument inductively by level demonstrates $c_{vw}^{(X)}\leq c_{vw}^{(Y)}$ for all edges $(v,w)$ of $T$.
    
    If $Y$ is recurrent, then the effective conductances $c_n^{(Y)}$ between the root and the level-$n$ vertices goes to 0 as $n\to\infty$.
    Rayleigh monotonicity then implies, because $c_{vw}^{(X)}\leq c_{vw}^{(Y)}$ for all edges $(v,w)$, that the corresponding effective conductance $c_n^{(X)}\leq c_n^{(Y)}$ for all $n$, and thus because (effective) conductance is always nonnegative, we have $c_n^{(X)}\to0$ as $n\to\infty$, which implies $X$ is recurrent as well.

    Now suppose $Y$ is positive recurrent.
    Letting $c_v^{(X)}=\sum_w c_{vw}^{(X)}$ and $c_v^{(Y)}=\sum_w c_{vw}^{(Y)}$ denote the total conductances incident to state $v$ for $X$ and $Y$, respectively, for all states $v$, our previous observation that the edgewise conductances of $X$ are less than or equal to that of $Y$ implies $c_v^{(X)}\leq c_v^{(Y)}$ for all $v$.
    Thus, because $Y$ is positive recurrent, we know $\sum_v c_v^{(Y)} < \infty$, and therefore $\sum_v c_v^{(X)}<\infty$ as well, and thus $X$ is positive recurrent.
    Moreover, to prove that $\E_X[\tau_r]\leq\E_y[\tau_r]$, it suffices to show $\pi_r \geq \sigma_r$, where $\pi$ is the stationary distribution for $X$ and $\sigma$ is the stationary distribution for $Y$.
    We have $\pi_r=c_r^{(X)}/\sum_v c_v^{(X)}$ while $\sigma_r=c_r^{(Y)}/\sum_v c_v^{(Y)}$, so because $c_r^{(X)}=c_r^{(Y)}$ by construction, as $c_{rv}^{(X)}=c_{rv}^{(Y)}$ for each level-1 vertex $v$, it suffices to show $\sum_v c_v^{(X)} \leq \sum_v c_v^{(Y)}$.
    But this immediately follows from the fact that $c_v^{(X)} \leq c_v^{(Y)}$ for all $v$, which completes the proof.
\end{proof}
\section{Parameter Space of Random Walks on Abelian Groups}\label{sec:parameter_space}
Consider a finitely generated abelian group $G$ and a non-empty finite subset $S\subseteq G$.
We consider the space of random walks on the Cayley graph $\Gamma(G,S)$.
In particular, let $A$ be the $(\abs{S}-1)$-dimensional standard simplex
\[ A = \{x\in\R^{\abs{S}}: x_1+x_2+\cdots+x_{\abs{S}}=1, x_i\geq0\text{ for all $i$}\}, \]
and enumerating $S=\{s_1,s_2,\dots,s_{\abs{S}}\}$, associate each $a\in A$ with the homogeneous random walk on state space $G$, where the transition probability from $g$ to $gs_i$ is $a_i$, for all $1\leq i \leq k$ and $g\in G$.
We will refer to such a random walk as a \textit{homogeneous random walk on $\Gamma(G,S)$}.
As $G$ is isomorphic to a group of the form $\Z^n\times H$ for some nonnegative integer $n$ and some finite abelian group $H$ (which can in turn be classified, but we do not need this), without loss of generality suppose $G=\Z^n\times H$, and write each $s_i$ as $(s_{i,1},s_{i,2},\dots,s_{i,n},s_i')\in\Z^n\times H$.
Note that the homogeneity of such a random walk implies all states have the same type, so it makes sense to refer to the entire random walk as recurrent, transient, positive recurrent, or null recurrent, even when the random walk is not irreducible, i.e., $S$ does not generate $G$.
Let $R\subseteq A$ be the subset of $A$ consisting of the points $a\in A$ that correspond to a recurrent random walk.
This section devotes itself to studying the properties of $R$.

We first provide a characterization of $R$.
This follows from standard theory on random walks on $\Z^n$, though to the best of our knowledge, this result has not been explicitly stated before in the literature.
\begin{proposition}\label{proposition:recurrence_classification}
    Using the notation above, let $R\subseteq A$ be the subset of recurrent homogeneous random walks on $\Gamma(G,S)$, where we view $G$ up to isomorphism as $\Z^n\times H$.
    Let $\pi:\Z^n\times H\to \Z^n$ be the projection $(x,h)\mapsto x$.
    For a given homogeneous random walk $a\in A$, let $T_a\subseteq\Z^n$ be given by
    \[ T_a=\pi(\{s_i\in S: a_i>0\}). \]
    Then $a\in R$ if and only if $\operatorname{span}(T_a)\subseteq\R^n$ has dimension at most 2 and
    \[ \sum_{i=1}^{\abs{S}} a_i \pi(s_i) = 0. \]
\end{proposition}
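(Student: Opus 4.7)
The plan is to reduce the problem to the classical recurrence classification for finitely supported homogeneous random walks on $\Z^n$ by projecting away the finite factor $H$. Let $\mu$ denote the step distribution on $G = \Z^n \times H$ corresponding to $a \in A$, and let $\bar\mu := \pi_*\mu$ be its pushforward to $\Z^n$, whose support is exactly $T_a$.

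The first main step is to show that the $G$-walk $W_t = (X_t, H_t)$ is recurrent if and only if the $\Z^n$-walk with step distribution $\bar\mu$ is recurrent. The ``only if'' direction is immediate, since any return of $W$ to $(0,e)$ forces $X_t = 0$. For the converse, I would let $0 = \tau_0 < \tau_1 < \tau_2 < \cdots$ be the successive a.s.\ finite return times of $X_t$ to $0$ and apply the strong Markov property at each $\tau_k$. This exhibits $(H_{\tau_k})_{k \geq 1}$ as an i.i.d.\ random walk on the finite abelian group $H$, started at $e$, with some step distribution $\nu$ on $H$. I would then invoke the structural fact that any such walk returns to $e$ infinitely often: the subsemigroup of $H$ generated by $\operatorname{supp}(\nu)$ is actually a subgroup, because for any $h$ of order $d$ one has $-h = (d-1)h$ and $e = dh$ lying in the subsemigroup. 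Consequently the walk is irreducible on a finite state space and therefore positive recurrent, so $H_{\tau_k} = e$, and hence $W_{\tau_k} = (0, e)$, for infinitely many $k$, yielding recurrence of $W$.

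With the reduction in hand, the second main step is to invoke the classical recurrence theory for random walks on $\Z^n$ with finite support. The projected walk lives on the subgroup $\langle T_a\rangle \leq \Z^n$, which is free abelian of rank $d := \dim \operatorname{span}(T_a)$. If $\sum_i a_i \pi(s_i) \neq 0$, the strong law of large numbers forces transience. If $\sum_i a_i \pi(s_i) = 0$, then $\bar\mu$ has mean zero and finite moments of all orders, and the Chung--Fuchs theorem (P\'olya's theorem in the lattice setting) yields recurrence if and only if $d \leq 2$. Combining the two steps gives the stated characterization.

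The main obstacle is the ``if'' direction of the reduction: one must carefully apply the strong Markov property at the possibly heavy-tailed stopping times $\tau_k$ to produce an honest i.i.d.\ walk on $H$, and then extract recurrence at $e$ from the finite abelian group fact above. The Chung--Fuchs/P\'olya classification for finite-support walks on $\Z^n$ is standard, so the reduction is where the real content of the proof lies.
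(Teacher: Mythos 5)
Your proposal is correct and follows essentially the same two-step strategy as the paper's proof: first project away the finite factor $H$ to reduce to a walk on $\Z^n$, then invoke the classical drift/dimension classification for finitely supported walks on the lattice $\langle T_a\rangle\cong\Z^d$. The only substantive difference is in the converse of the reduction, where you analyze the induced i.i.d.\ walk on the finite group $H$ sampled at the return times of the projected walk (using that any walk on a finite group is recurrent on the subgroup it generates), whereas the paper instead extracts a uniform $\delta>0$ and horizon $K$ so that each visit of the projection to $0$ gives probability at least $\delta$ of hitting $(0,0)$ within $K$ steps; both arguments are valid, and the paper is merely more explicit than you are about the irreducibility of the projected walk (via $\operatorname{span}_{\N}(T_a)=\operatorname{span}_{\Z}(T_a)$ under zero drift) before citing the standard $\Z^d$ classification.
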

\begin{proof}
    Fix $a\in A$.
    Let $X=(X_0,X_1,\dots)$ denote a homogeneous random walk corresponding to $a$, starting at $X_0=0$.
    We first reduce to the case $G=\Z^n$ by observing that $X$ is recurrent if and only if $\pi(X_0),\pi(X_1),\dots$, which is a homogeneous random walk on $\Gamma(\Z^n,\pi(S))$ corresponding to the same parameters $a\in A$, is recurrent.
    To address the technicality of the case when $\pi|_S$ is not injective, note that the result immediately implies its natural generalization to the case when $S$ is a multiset.
    The forward direction is obvious; to see the reverse direction, note that each time $\pi(X_t)=0$, we know $X_t=(0,h)$ for some $h\in H$.
    For each possible such $h$, i.e., all $h\in H$ such that $(0,h)\in\operatorname{span}_{\N}(\{s_i\in S:a_i>0\})$ is a $\N$-linear combination of the elements of $\{s_i\in S:a_i>0\}$, because $h$ has finite order, there exists some nonnegative integer $k$ and some $\varepsilon>0$ such that we have $\P(X_k=0|X_0=(0,h))\geq\varepsilon$.
    As $H$ is finite, taking the maximum over $k$ and the minimum over $\varepsilon$ implies there exists some nonnegative integer $K$ and some $\delta>0$ such that with probability at least $\delta$, we have $X_{t+k}=0$ for some $0 \leq k \leq K$.
    Thus because the projected random walk returns to 0 infinitely many times, and each time we have at least $\delta$ probability that $X$ will return to 0 within $K$ steps, $X$ returns to 0 with probability 1.

    Now, assume $G=\Z^n$ (we can also assume $S$ is a set, though this is not needed nor used).
    Let $d$ be the dimension of $\operatorname{span}(T_a)$, and let $V=\operatorname{span}_{\Z}(T_a)\cong\Z^d$.
    By applying the isomorphism $V\to\Z^d$, without loss of generality we can thus view our random walk $X=(X_0,X_1,\dots)$ as a random walk on $\Z^d$, where $T_a$ generates $\Z^d$.
    
    We first show the two conditions, which we refer to as the dimension condition and the mean drift condition, respectively, are necessary.
    It is well-known (e.g., one can prove this via Hoeffding's inequality in a similar approach as we did in \cref{subsection:transient}) that if the mean drift condition does not hold, then the random walk is transient, and thus it is necessary.
    Thus, to show the dimension condition is necessary, suppose for sake of contradiction that $a\in R$ but $d\geq 3$.
    We know that $\operatorname{span}_{\Z}(T_a)=\Z^d$, and we claim that in fact $\operatorname{span}_{\N}(T_a)=\Z^d$ as well.
    As $a\in R$ implies the mean drift condition, we know that, letting $M\in\Z^{d\times\abs{T_a}}$ be the matrix whose columns are the elements of $T_a$, we have a solution $x\in\R^{\abs{T_a}}$ to $Mx=0$ whose entries are all strictly positive.
    By Gauss--Jordan elimination, we can find a basis for the nullspace $N(M)$ where all entries of all basis vectors are rational.
    Expressing $x$ as a linear combination of these vectors, by rounding the coefficients to sufficiently close rationals, we obtain a new $\widetilde x\in N(M)$ whose entries are all rational and strictly positive.
    Scaling this up, we can then assume $\widetilde x$ has positive integer entries, and thus $\operatorname{span}_{\Z}(T_a)\subseteq\operatorname{span}_{\N}(T_a)$ by adding a sufficiently large positive integer multiple of $\widetilde x$ to the integer coefficients defining the $\Z$-linear combination.

    As $\operatorname{span}_{\N}(T_a)=\Z^d$, our random walk is irreducible.
    As $d\geq3$, standard theory concerning irreducible homogeneous random walks on $\Z^d$ implies the random walk is transient, contradicting $a\in R$ (see, for example, \cite[Corollary 13.11]{Woess}, where using their notation of $\mu^{(k)}$ to denote the $k$-fold convolution of the transition law $\mu$, we have $d\geq3$ if and only if $\mu^{(k)}(0)$ is summable over $k$, or equivalently Green's function $G(0,0)<\infty$, which is well-known to be equivalent to transience).
    Thus, the dimension condition is necessary.

    Finally, we show the combination of these two conditions is sufficient.
    Suppose $d\leq 2$ and the mean drift condition holds.
    As before, the mean drift condition implies our random walk is irreducible, and so by \cite[Corollary 13.11]{Woess} again we know our random walk is recurrent.
\end{proof}
With this characterization, we can show that $R$ is always closed, both as a subset of $\R^{\abs{S}}$ and equivalently under the subspace topology on $A$, as $A$ is closed; this also implies $A\setminus R$ is open in the subspace topology on $A$.
\begin{theorem}\label{theorem:recurrence_closed}
    Using the notation above, let $R\subseteq A$ be the subset of recurrent homogeneous random walks on $\Gamma(G,S)$, for a finitely generated abelian group $G$ and non-empty finite subset $S\subseteq G$.
    Then $R$ is closed.
\end{theorem}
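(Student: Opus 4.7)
The plan is to invoke the characterization of $R$ given by \cref{proposition:recurrence_classification} and show that each of its two defining conditions, the mean drift condition $\sum_i a_i \pi(s_i) = 0$ and the dimension condition $\dim \operatorname{span}(T_a) \leq 2$, is preserved under pointwise limits. Concretely, I would take an arbitrary convergent sequence $a^{(n)} \to a$ in $A$ with each $a^{(n)} \in R$ and verify that $a \in R$.

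The mean drift condition is immediate: for each $n$ we have $\sum_i a_i^{(n)} \pi(s_i) = 0$, and since $\abs{S}$ is finite this linear equation is continuous in $a$, so passing to the limit gives $\sum_i a_i \pi(s_i) = 0$.

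The subtle step, and the one I expect to be the only real obstacle, is the dimension condition, because the map $a \mapsto T_a$ is not continuous: as $a^{(n)} \to a$, a coordinate with $a_i^{(n)} > 0$ for all $n$ may satisfy $a_i = 0$, so in principle $T_{a^{(n)}}$ could strictly contain $T_a$. The key observation, however, is that the discontinuity only goes one way. For every index $i$ with $a_i > 0$ we have $a_i^{(n)} > 0$ for all sufficiently large $n$, and since $\abs{S}$ is finite we can choose a single threshold $N$ so that $a_i^{(n)} > 0$ for every such $i$ and every $n \geq N$. This gives $T_a \subseteq T_{a^{(n)}}$ for all $n \geq N$, hence
\[
\operatorname{span}(T_a) \subseteq \operatorname{span}(T_{a^{(n)}}),
\]
and in particular $\dim \operatorname{span}(T_a) \leq \dim \operatorname{span}(T_{a^{(n)}}) \leq 2$ by \cref{proposition:recurrence_classification} applied to $a^{(n)} \in R$.

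Combining the two conclusions, $a$ satisfies both conditions of \cref{proposition:recurrence_classification} and therefore $a \in R$. Since $a^{(n)} \to a$ was an arbitrary convergent sequence in $R$, this shows $R$ is closed in $A$ (and, since $A$ is closed in $\R^{\abs{S}}$, closed in $\R^{\abs{S}}$ as well). The ``openness'' corollary for $A \setminus R$ in the subspace topology then follows at once.
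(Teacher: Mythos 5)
Your proof is correct, but it takes a different (purely topological) route from the paper's. You argue sequential closedness directly: the mean drift condition passes to the limit by continuity, and for the dimension condition you exploit the one-sided semicontinuity of $a\mapsto T_a$ --- namely that $T_a\subseteq T_{a^{(n)}}$ for all large $n$, so $\dim\operatorname{span}(T_a)\leq\dim\operatorname{span}(T_{a^{(n)}})\leq 2$. This correctly identifies and disposes of the only subtle point, which is that $T_{a^{(n)}}$ may strictly contain $T_a$ in the limit but never the reverse. The paper instead decomposes $R$ structurally: for each of the finitely many subspaces $V=\operatorname{span}(T_a)$ of dimension at most $2$ that can arise, the condition $\operatorname{span}(T_a)\subseteq V$ is equivalent to the linear equalities $a_i=0$ for all $i$ with $\pi(s_i)\notin V$, so together with the (linear) mean drift condition and the linear inequalities defining $A$, the corresponding piece of $R$ is a closed polyhedral set, and $R$ is a finite union of such sets. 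Both arguments rest entirely on \cref{proposition:recurrence_classification} and are of comparable length; the paper's version yields the slightly stronger structural fact that $R$ is a finite union of polytopes (which is implicitly useful for the later convexity and connectedness discussion), while yours is the more direct verification of closedness and generalizes more readily to situations where the defining conditions are not linear.
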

\begin{proof}
    Using the characterization from \cref{proposition:recurrence_classification}, note that there are finitely many values $\operatorname{span}(T_a)$ can take on, trivially at most $2^{\abs{S}}$.
    Thus, there are only finitely many such subspaces that have dimension at most 2.
    Pick such a subspace $V$, and note that requiring $\operatorname{span}(T_a)\subseteq V$ is equivalent to requiring $a_i=0$ for all $i$ such that $\pi(s_i)\not\in V$.
    The mean drift condition is also a linear equality in $a_1,a_2,\dots,a_{\abs{S}}$.
    Thus, recalling that $A$ is also defined by linear inequalities, the set of points that satisfy all these linear inequalities is closed.
    Taking the union over all such $V$ yields $R$ as a finite union of closed sets, which is closed.
\end{proof}
Next, we show that when $S$ is symmetric, i.e., closed under inverses, $R$ is path-connected.
\begin{theorem}\label{theorem:recurrence_connected_symmetric}
    Using the notation above, let $R\subseteq A$ be the subset of recurrent homogeneous random walks on $\Gamma(G,S)$, for a finitely generated abelian group $G$ and non-empty finite subset $S\subseteq G$.
    If $S$ is symmetric, then $R$ is path-connected.
\end{theorem}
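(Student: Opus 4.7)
The plan is to exhibit, for every $a \in R$, an explicit piecewise-linear path in $R$ joining $a$ to a single fixed ``hub'' walk. If $\pi(s) = 0$ for all $s \in S$, then $\operatorname{span}(T_a) \subseteq \{0\}$ and the mean drift condition is automatic for every $a \in A$, so $R = A$ is convex and there is nothing to prove. Otherwise, fix $s^\ast \in S$ with $\pi(s^\ast) \neq 0$; by symmetry of $S$ we also have $(s^\ast)^{-1} \in S$. Let $u^\ast \in A$ be the uniform distribution on $\{s^\ast, (s^\ast)^{-1}\}$. Since $\pi((s^\ast)^{-1}) = -\pi(s^\ast)$, the walk $u^\ast$ has zero drift and $1$-dimensional support span, so $u^\ast \in R$ by \cref{proposition:recurrence_classification}.

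The first step is to reduce to symmetric distributions. For $a \in R$ and each index $i$, let $i^{-1}$ denote the unique index with $s_{i^{-1}} = s_i^{-1}$, and define the symmetrization $a^{\mathrm{sym}}_i := (a_i + a_{i^{-1}})/2$. A direct calculation gives $\sum_i a^{\mathrm{sym}}_i \pi(s_i) = 0$, and $T_{a^{\mathrm{sym}}} = T_a \cup (-T_a)$, which spans the same subspace as $T_a$; hence $a^{\mathrm{sym}} \in R$. The straight-line segment $(1-t)a + t a^{\mathrm{sym}}$ has zero drift and its support projection lies in $T_a \cup (-T_a)$ for every $t \in [0,1]$, so it stays in $R$. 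Thus it suffices to join every symmetric $a \in R$ to $u^\ast$.

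Fix a symmetric $a \in R$. If $\pi(s) = 0$ for every $s$ in the support of $a$, then the segment $(1-t)a + t u^\ast$ has zero drift, and its support projection lies in $\{0\} \cup \{\pm\pi(s^\ast)\}$, spanning the $1$-dimensional subspace $\operatorname{span}(\pi(s^\ast))$; so the entire segment is in $R$. Otherwise, pick $s_0$ in the support of $a$ with $\pi(s_0) \neq 0$; symmetry of $a$ forces $s_0^{-1}$ to be in the support as well. Let $u_0$ be the uniform distribution on $\{s_0, s_0^{-1}\}$. The segment from $a$ to $u_0$ keeps its support inside that of $a$ (so $T$ stays equal to $T_a$) and its drift zero, hence remains in $R$. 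The segment from $u_0$ to $u^\ast$ has zero drift and support projection spanning $\operatorname{span}(\pi(s_0),\pi(s^\ast))$, of dimension at most $2$, so it too lies in $R$.

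The only nontrivial point to verify along the way is the dimension condition on each straight-line segment, and this is precisely where symmetry of $S$ is essential: closing supports under the involution $s \mapsto s^{-1}$ guarantees that no new direction in $\R^n$ appears at intermediate times beyond those already present at the two endpoints after symmetrization. Concatenating the three straight-line segments above produces a path in $R$ from any $a$ to $u^\ast$, completing the proof.
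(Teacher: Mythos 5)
Your proposal is correct and follows essentially the same route as the paper: both connect an arbitrary $a\in R$ to a ``hub'' walk that is uniform on a symmetric pair $\{s,-s\}$ with $s$ in the support of $a$, and then join any two such hubs by a straight segment, using that the segment from $a$ to its hub does not enlarge $\operatorname{span}(T_a)$ and that two hubs together span at most a $2$-dimensional subspace. The only difference is your intermediate symmetrization step, which is harmless but unnecessary, since the segment from $a$ directly to the uniform walk on $\{s_0,s_0^{-1}\}$ already stays in $R$ because $-\pi(s_0)\in\operatorname{span}(T_a)$.
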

\begin{proof}
    Consider two distinct points $a,b\in R$.
    By \cref{proposition:recurrence_classification}, we know that both satisfy the mean drift condition.
    Pick some $s_i\in S$ such that $a_i>0$ and some $s_j\in S$ such that $b_j>0$.
    As $S$ is symmetric, we know $-s_i,-s_j\in S$ as well.
    Let $a'\in A$ correspond to the homogeneous random walk with probability $\frac{1}{2}$ for each of $s_i$ and $-s_i$ (if $s_i=0$, then the random walk is the trivial walk with probability 1 of staying still).
    Using the notation of \cref{proposition:recurrence_classification}, as $\operatorname{span}(\{\pi(s_i),\pi(-s_i)\})\subseteq\operatorname{span}(T_a)$, we know that the closed line segment from $a$ to $a'$ lies in $R$.
    Similarly defining $b'\in A$ to correspond to the homogeneous random walk with probability $\frac{1}{2}$ for each of $s_j$ and $-s_j$, we have the closed line segment from $b$ to $b'$ lies in $R$.
    It then suffices to show $a'$ and $b'$ are path-connected, and we see that in fact, because $\operatorname{span}(\{\pi(s_i),\pi(s_j)\})$ is at most 2-dimensional, the closed line segment from $a'$ to $b'$ lies in $R$, so $a'$ and $b'$ are path-connected.
\end{proof}
Note that while $S$ being symmetric is sufficient for $R$ to be path-connected, it is not necessary; one could easily adapt the proof to show that an $S$ of the form $S=T\cup -cT$ for any positive integer $c$ will also yield a path-connected $R$.
At the same time, the following example, which takes inspiration from the trigonal bipyramidal molecular geometry structure in chemistry, shows that $R$ is not always path-connected.
\begin{example}
    Let $G=\Z^3$, and let $S=\{(0,0,1),(1,0,0),(-1,1,0),(-1,-1,0),(0,0,-1)\}$.
    Write $S=S_1\cup S_2$, where $S_1=\{(1,0,0),(-1,1,0),(-1,1-1,0)\}$ corresponds to the triangular base of the trigonal bipyramid, and $S_2=\{(0,0,1),(0,0,-1)\}$ corresponds to the apexes.
    Then by \cref{proposition:recurrence_classification}, we see that any $a\in R$ either has $T_a=S_1$ or $T_a=S_2$.
    The two components of $R$ corresponding to the cases $T_a=S_1$ and $T_a=S_2$ are both non-empty, and these two components do not intersect.
    Thus, these two components are not path-connected, and so $R$ is not path-connected.
    It is straightforward to verify that this is a minimum example of path-disconnected $R$, i.e., having the minimum values for $n$ and $\abs{S}$, namely 3 and 5, respectively.
\end{example}
A careful analysis of \cref{proposition:recurrence_classification}, in particular one that understands the intersection behavior between the various subspaces of the form $\operatorname{span}(T_a)$, should be able to completely classify when $R$ is path-connected in the general case when $S$ is not assumed to be symmetric; we leave this as an open question to the interested reader.

One can further ask when $R$ is not just path-connected, but in fact convex.
The following technical result characterizes when $R$ is convex.
\begin{theorem}\label{theorem:recurrence_convex}
    Using the notation of \cref{proposition:recurrence_classification}, $R$ is convex if and only if there exists a linear subspace $V\subseteq\R^n$ of dimension at most 2 such that for all $a\in R$, i.e., all $a\in A$ satisfying $\dim(\operatorname{span}(T_a))\leq2$ and $\sum_{i=1}^{\abs{S}}a_i \pi(s_i)=0$, we have $\operatorname{span}(T_a)\subseteq V$.
\end{theorem}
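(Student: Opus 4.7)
The plan is to prove both directions by leveraging the fact that for any strictly positive convex combination $c=\sum_{j}\lambda_j a^{(j)}$ with $\lambda_j>0$, the support satisfies $T_c=\bigcup_j T_{a^{(j)}}$, since $c_i=\sum_j\lambda_j a^{(j)}_i>0$ if and only if some $a^{(j)}_i>0$. This structural fact, combined with the characterization in \cref{proposition:recurrence_classification} and the observation that the mean drift condition $\sum_i a_i\pi(s_i)=0$ is linear in $a$, drives both directions.

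For the \emph{if} direction, I would assume such a subspace $V$ of dimension at most 2 exists. Given $a,b\in R$ and $\lambda\in[0,1]$, the convex combination $c=\lambda a+(1-\lambda)b$ lies in $A$ since $A$ is itself convex. Its support satisfies $T_c\subseteq T_a\cup T_b\subseteq V$, so $\operatorname{span}(T_c)\subseteq V$ has dimension at most 2. Linearity of the mean drift condition gives $\sum_i c_i\pi(s_i)=\lambda\cdot 0+(1-\lambda)\cdot 0=0$. By \cref{proposition:recurrence_classification}, $c\in R$, so $R$ is convex.

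For the \emph{only if} direction, assume $R$ is convex. If $R=\emptyset$ take $V=\{0\}$; otherwise, since $T_a\subseteq\pi(S)$ and $\pi(S)$ is finite, only finitely many distinct subsets arise as $T_a$ for $a\in R$. Enumerate them as $T^{(1)},\ldots,T^{(m)}$, pick representatives $a^{(1)},\ldots,a^{(m)}\in R$ realizing each, and form the centroid $c=\frac{1}{m}\sum_{j=1}^m a^{(j)}$. By convexity (iterating the pairwise definition) $c\in R$, and since all $m$ coefficients are strictly positive, $T_c=\bigcup_{j=1}^m T^{(j)}$. Setting $V=\operatorname{span}(T_c)$, \cref{proposition:recurrence_classification} gives $\dim V\leq 2$, and for any $a\in R$ we have $T_a=T^{(j)}$ for some $j$, hence $\operatorname{span}(T_a)\subseteq V$.

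There is no substantial obstacle here; the proof is short once one recognizes that the support map is monotone under strictly positive convex combinations. The only mild subtlety worth flagging is that one cannot just work pairwise: two 1-dimensional subspaces $\operatorname{span}(T_{a})$ and $\operatorname{span}(T_{b})$ always lie in a common 2-dimensional subspace, yet three such lines need not, so taking the midpoint of two representatives is insufficient. The fix is precisely the centroid construction above, which packages all finitely many possible supports into a single element of $R$ whose own support dominates every $T_a$.
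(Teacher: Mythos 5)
Your proof is correct. The \emph{if} direction is essentially identical to the paper's: linearity of the mean drift condition plus $\operatorname{span}(T_c)\subseteq V$ for the convex combination. The \emph{only if} direction, however, takes a genuinely different route. The paper argues by contradiction: assuming no such $V$ exists, it finds $a,b\in R$ with $\operatorname{span}(T_b)\not\subseteq\operatorname{span}(T_a)$, examines the midpoint $c=\frac{a+b}{2}$, and if that midpoint still lands in $R$ (because the combined span happens to have dimension exactly $2$), it runs the argument a second time with a further $d\in R$ whose span escapes $\operatorname{span}(T_c)$, reaching a midpoint outside $R$. Your centroid construction replaces this two-round case analysis with a single direct step: since only finitely many supports $T^{(1)},\dots,T^{(m)}$ occur among elements of $R$, the centroid of representatives lies in $R$ by convexity, its support is the full union $\bigcup_j T^{(j)}$, and $V=\operatorname{span}(T_c)$ then works immediately via \cref{proposition:recurrence_classification}. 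Your version is direct rather than by contradiction, avoids the dimension casework, and your closing remark correctly identifies the exact obstruction (two lines always share a plane, three need not) that forces the paper into its second round; the tradeoff is only that you need the easy induction extending pairwise convexity to finite convex combinations. Both arguments ultimately rest on the same key fact that strictly positive convex combinations have support equal to the union of the supports.
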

\begin{proof}
    The reverse direction is clear: if such a $V$ exists, then all convex combinations of points in $R$ will continue to satisfy the mean drift condition, as it is a linear equality, and because $\pi(s_i)\in V$ for all $i$ such that there exists $a\in R$ with $a_i>0$, we have $\operatorname{span}(T_a)\subseteq V$ has dimension at most 2 for any convex combination $a$ of points in $R$, so by \cref{proposition:recurrence_classification}, $a\in R$.

    To see the forward direction, suppose for sake of contradiction that there does not exist such a $V$.
    There must exist some $a\in R$ such that $\dim(\operatorname{span}(T_a))\geq1$, as otherwise $V=\{0\}$ suffices.
    And there must exist another $b\in R$ such that $\operatorname{span}(T_b)\not\subseteq\operatorname{span}(T_a)$, as otherwise $\operatorname{span}(T_a)$, which is at most 2-dimensional, would suffice as $V$.
    If $\dim(\operatorname{span}(T_a\cup T_b))>2$, then as $A$ is convex, letting $c=\frac{a+b}{2}\in A$, we have $\operatorname{span}(T_c)=\operatorname{span}(T_a\cup T_b)$ has dimension greater than 2, and thus $c\not\in R$, contradicting convexity of $R$.
    Otherwise, again letting $c=\frac{a+b}{2}\in A$, we have $\dim(\operatorname{span}(T_c))=2$, so $c\in R$.
    Then there must exist some $d\in R$ such that $\operatorname{span}(T_d)\not\subseteq\operatorname{span}(T_c)$, as otherwise $V=\operatorname{span}(T_c)$ would suffice.
    But then we similarly conclude $\frac{c+d}{2}\in A\setminus R$, because it yields a subspace of dimension greater than 2, contradicting convexity of $R$.
    Hence, such a $V$ must exist.
\end{proof}
Path-connectedness is a necessary condition for convexity, and as we saw previously, when $S$ is symmetric $R$ is always path-connected, but when $S$ is not symmetric the path-connectivity of $R$ is more subtle.
The following result implies a much simpler characterization for the convexity of $R$ when $S$ is symmetric: $R$ is convex if and only if $\dim(\operatorname{span}(\pi(S)))\leq2$, i.e., the generators essentially lie in a plane.
\begin{corollary}\label{corollary:recurrence_convex_symmetric}
    Using the notation of \cref{proposition:recurrence_classification}, if $\dim(\operatorname{span}(\pi(S)))\leq2$, then $R$ is convex, and if $\dim(\operatorname{span}(\pi(S)))\geq 3$ and $S$ is symmetric, then $R$ is not convex.
\end{corollary}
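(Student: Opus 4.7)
The plan is to deduce this corollary directly from \cref{theorem:recurrence_convex}, which reduces the question of convexity of $R$ to the existence of a single linear subspace $V\subseteq\R^n$ of dimension at most $2$ that contains $\operatorname{span}(T_a)$ for every $a\in R$.

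For the first implication, suppose $\dim(\operatorname{span}(\pi(S)))\leq2$. The plan is simply to take $V=\operatorname{span}(\pi(S))$. Then $V$ has dimension at most $2$ by hypothesis, and for every $a\in R$ we have $T_a\subseteq\pi(S)\subseteq V$, so $\operatorname{span}(T_a)\subseteq V$. By \cref{theorem:recurrence_convex}, $R$ is convex. No case analysis or symmetry is needed here.

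For the second implication, suppose $\dim(\operatorname{span}(\pi(S)))\geq 3$ and $S$ is symmetric. The strategy is to produce, for every $s\in S$, a point $a^{(s)}\in R$ whose associated subspace is exactly $\operatorname{span}(\pi(s))$, which will force any candidate $V$ from \cref{theorem:recurrence_convex} to contain all of $\pi(S)$ and hence to have dimension at least $3$. Concretely, since $S$ is symmetric, both $s$ and $-s$ lie in $S$, and I would let $a^{(s)}\in A$ be the distribution putting mass $\tfrac12$ on $s$ and $\tfrac12$ on $-s$ (if $s=-s$ in $G$, put mass $1$ on $s$). Then $T_{a^{(s)}}=\{\pi(s),-\pi(s)\}$, so $\operatorname{span}(T_{a^{(s)}})=\operatorname{span}(\pi(s))$ has dimension at most $1\leq2$, and the mean drift $\tfrac12\pi(s)+\tfrac12\pi(-s)=0$ vanishes, so by \cref{proposition:recurrence_classification} we have $a^{(s)}\in R$. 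Now if some $V$ as in \cref{theorem:recurrence_convex} existed, it would have to contain $\operatorname{span}(\pi(s))$ for every $s\in S$, hence contain $\operatorname{span}(\pi(S))$, contradicting $\dim V\leq 2<3\leq\dim(\operatorname{span}(\pi(S)))$. By \cref{theorem:recurrence_convex}, $R$ is not convex.

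There is no real obstacle: both directions are short once \cref{theorem:recurrence_convex} is in hand, and the role of symmetry is isolated to the single observation that the two-point symmetric walks supported on $\{s,-s\}$ are automatically recurrent. The only minor subtlety to keep in mind is the degenerate case where $\pi(s)=0$ or $s$ has order $2$, which is handled by using the constant walk at $s$ (which is trivially recurrent with $\operatorname{span}(T_a)=\{0\}$) and still forces $\pi(s)\in V$ via the other generators; since we are ranging $s$ over all of $S$, these degenerate contributions do not affect the argument.
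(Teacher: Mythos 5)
Your proof is correct and takes essentially the same approach as the paper: the convex direction is \cref{theorem:recurrence_convex} with $V=\operatorname{span}(\pi(S))$, and the non-convex direction rests on the observation that the symmetric two-point walks supported on $\{s,-s\}$ are recurrent. The only cosmetic difference is that the paper exhibits the failure of convexity directly, by averaging three such walks along linearly independent directions to get a point with three-dimensional span and hence outside $R$, whereas you invoke the forward direction of \cref{theorem:recurrence_convex}; these amount to the same argument, since that forward direction is itself proved by such an averaging.
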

\begin{proof}
    The convex part immediately follows from \cref{theorem:recurrence_convex} by letting $V=\operatorname{span}(\pi(S))$.
    If $S$ is symmetric and $\dim(\operatorname{span}(\pi(S)))\geq3$, consider three linearly independent $\pi(s_i)$, $\pi(s_j)$, and $\pi(s_k)$.
    Then the homogeneous random walk with probability $\frac{1}{2}$ for each of $s_i$ and $-s_i$ is recurrent, and similarly for $s_j$ and $s_k$.
    However, the average $a$ of these three random walks has $\dim(\operatorname{span}(T_a))=3$, so it is not recurrent, and thus $R$ is not convex.
\end{proof}
Hence, in the symmetric case, $R$ is always path-connected but rarely convex, in the sense that the random walks must be essentially confined to a plane in order for $R$ to be convex.
Indeed, if we let $G=\Z^n$ and let $S$ consist of the elementary basis vectors and their inverses, we see $R$ is convex if and only if $n\leq 2$.

Another question of this topological flavor is whether $R$ is simply-connected or not, where recall convex implies simply-connected implies path-connected.
We leave this as an open question.
\begin{question}\label{question:recurrence_simply_connected}
    When is $R$ simply-connected?
\end{question}

Now, let us turn our attention instead to the set $R^c:=A\setminus R$ of transient homogeneous random walks.
\cref{theorem:recurrence_closed} tells us that $R^c$ is open in the subspace topology on $A$.

We note that $R^c$ is not always path-connected, even in the symmetric case.
Take $G=\Z$ and $S=\{1,-1\}$.
Then $A$ can be identified with $[0,1]$ via the projection $a\mapsto a_1$, where we see $R=\{1/2\}$, so $R^c = [0,1/2)\cup(1/2,1]$ is not path-connected.
However, we believe that $R^c$ should be path-connected aside from essentially similar one-dimensional examples, as typically $R$ will be a lower-dimensional object than $A$, and thus easy to avoid.
We leave this as an open question.
\begin{question}\label{question:transience_connected}
    When is $R^c$ path-connected?
\end{question}
We provide partial progress towards this question in the following result.
\begin{theorem}\label{theorem:transience_connected_3}
    Using the notation of \cref{proposition:recurrence_classification}, if there exists transient $a\in R^c$ satisfying the mean drift condition, i.e., $\dim(\operatorname{span}(T_a))\geq3$ and $\sum_{i=1}^{\abs{S}}a_i\pi(s_i)=0$, then $R^c$ is path-connected.
\end{theorem}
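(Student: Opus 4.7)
The plan is to use the given point $a$ as a universal ``hub'' and to show that every transient $b \in R^c$ can be connected to $a$ by the straight line segment joining them, which lies entirely in $R^c$. This reduces the problem to a simple dimension bookkeeping based on \cref{proposition:recurrence_classification}.

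First, I would record the elementary observation that the support grows under strict convex combinations: for any $a,b\in A$ and $t\in(0,1)$, the point $c_t:=(1-t)b+ta$ has $i$-th coordinate positive if and only if $a_i>0$ or $b_i>0$, so that $T_{c_t}=T_a\cup T_b$. In particular $\operatorname{span}(T_{c_t})\supseteq\operatorname{span}(T_a)$, so $\dim(\operatorname{span}(T_{c_t}))\geq\dim(\operatorname{span}(T_a))\geq3$. By \cref{proposition:recurrence_classification}, any $c\in A$ with $\dim(\operatorname{span}(T_c))\geq 3$ automatically lies in $R^c$, since the dimension condition for recurrence already fails, regardless of the mean drift.

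Next, fix an arbitrary $b\in R^c$ and consider the continuous path $\gamma:[0,1]\to A$ defined by $\gamma(t)=(1-t)b+ta$. For $t=0$ we have $\gamma(0)=b\in R^c$ by hypothesis; for $t=1$ we have $\gamma(1)=a\in R^c$ because $a$ is transient by assumption; and for $t\in(0,1)$ the previous paragraph gives $\dim(\operatorname{span}(T_{\gamma(t)}))\geq 3$, hence $\gamma(t)\in R^c$. Thus $\gamma$ is a path in $R^c$ connecting $b$ to $a$. Since $b$ was arbitrary, $R^c$ is path-connected.

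There isn't really a ``hard step'' here once one notices that the $a$ provided by the hypothesis functions as a hub, and that taking convex combinations can only enlarge the support set $T$, never shrink it. The subtlety that might trip one up is distinguishing what breaks recurrence along the path: for $b$ we could have transience for either reason (drift or dimension), but along the open segment the reason is uniformly the dimension condition, which is all one needs. No irreducibility or symmetry hypothesis on $S$ is used.
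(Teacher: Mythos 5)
Your proof is correct, but it takes a genuinely different route from the paper's. The paper fixes the dichotomy ``satisfies the mean drift condition / does not,'' shows that a segment joining a transient point with zero drift to a transient point with nonzero drift stays in $R^c$ (by linearity of the drift, only the zero-drift endpoint can satisfy the drift condition on that segment), and then connects arbitrary pairs by one or two such segments, with a separate degenerate case when every point of $A$ has zero drift. You instead use the hypothesized $a$ as a single hub and exploit the observation that strict convex combinations can only enlarge the support, so $T_{\gamma(t)}\supseteq T_a$ and hence $\dim(\operatorname{span}(T_{\gamma(t)}))\geq 3$ for all $t\in(0,1)$; thus every interior point of the segment from any $b\in R^c$ to $a$ fails the dimension condition of \cref{proposition:recurrence_classification} and is transient, regardless of its drift. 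This gives a uniform one-segment argument with no case analysis and no need for the degenerate case (which cannot occur under the hypothesis anyway, since a transient $a$ is assumed to exist). The paper's decomposition has the minor virtue of isolating exactly which condition fails where along the path, but your argument is shorter and identifies the cleaner structural fact --- the dimension obstruction is monotone under convex combination --- that makes the hub work.
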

\begin{proof}
    We show that for any transient $a$ satisfying the mean drift condition, and any transient $b$ not satisfying the mean drift condition, the closed line segment between $a$ and $b$ is contained entirely in $R^c$.
    This proves the result, because to go from any transient $a$ to another transient $b$ along a path entirely in $R^c$, if exactly one satisfies the mean drift condition, a straight line segment suffices, and otherwise, either $a$ and $b$ both satisfy the mean drift condition or neither satisfies it.
    If neither satisfies it, the existence of a transient $c\in R^c$ satisfying the mean drift condition means we can follow two line segments, one from $a$ to $c$, then one from $c$ to $b$.
    If both satisfy it, then if there exists $c\in R^c$ not satisfying the mean drift condition, we similarly follow two line segments to and from $c$ to connect $a$ and $b$ along a path in $R^c$.
    Otherwise, all points in $A$ satisfy the mean drift condition, which means $\pi(s)=0$ for all $s\in S$, which by \cref{proposition:recurrence_classification} implies $R=A$ and thus $R^c=\emptyset$ is trivially path-connected.

    So consider any transient $a$ satisfying the mean drift condition and any transient $b$ not satisfying the mean drift condition.
    Then the closed line segment between $a$ and $b$ is contained in $R^c$ because the only point on this closed line segment that satisfies the mean drift condition is $a$, which we assumed is transient.
    This completes the proof.
\end{proof}
As partial progress towards \cref{question:transience_connected}, and in particular towards our belief that $R^c$ should be path-connected for examples on at least two dimensions, the following corollary of \cref{theorem:transience_connected_3} shows that symmetric examples on at least three dimensions have path-connected $R^c$.
\begin{corollary}\label{corollary:transience_connected_3}
    Using the notation of \cref{proposition:recurrence_classification}, if $S$ is symmetric and $\dim(\operatorname{span}(\pi(S)))\geq3$, then $R^c$ is path-connected.
\end{corollary}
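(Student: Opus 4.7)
The plan is to invoke Theorem~\ref{theorem:transience_connected_3}, which reduces the task to exhibiting a single transient $a\in R^c$ that still satisfies the mean drift condition and has $\dim(\operatorname{span}(T_a))\geq 3$. Once such an $a$ is produced, path-connectedness of $R^c$ follows immediately from the theorem.

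To construct $a$, I would use the assumption $\dim(\operatorname{span}(\pi(S)))\geq 3$ to pick three elements $s_i,s_j,s_k\in S$ whose projections $\pi(s_i),\pi(s_j),\pi(s_k)$ are linearly independent in $\R^n$. Since $S$ is symmetric, the elements $-s_i,-s_j,-s_k$ also lie in $S$, and their projections are simply $-\pi(s_i),-\pi(s_j),-\pi(s_k)$. Let $a\in A$ be the homogeneous random walk that places probability $1/6$ on each of $s_i,-s_i,s_j,-s_j,s_k,-s_k$ (and probability $0$ on all other elements of $S$; if some of the chosen $\pm s$'s happen to coincide as elements of $S$, pool their probabilities accordingly without changing the argument).

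Then $\sum_{m=1}^{\abs{S}}a_m\pi(s_m)=\tfrac{1}{6}\bigl(\pi(s_i)-\pi(s_i)+\pi(s_j)-\pi(s_j)+\pi(s_k)-\pi(s_k)\bigr)=0$, so $a$ satisfies the mean drift condition. Moreover $T_a\supseteq\{\pi(s_i),\pi(s_j),\pi(s_k)\}$, so $\dim(\operatorname{span}(T_a))\geq 3$. By the dimension clause of Proposition~\ref{proposition:recurrence_classification}, this forces $a\notin R$, i.e.\ $a$ is transient. Hence $a\in R^c$ satisfies all the hypotheses of Theorem~\ref{theorem:transience_connected_3}, and path-connectedness of $R^c$ follows.

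There is no real obstacle here: the only delicate point is the bookkeeping in the construction of $a$ when $S$ happens to contain some zero element or when the sets $\{s_i,-s_i\}$, $\{s_j,-s_j\}$, $\{s_k,-s_k\}$ overlap, but in every case the resulting $a$ assigns positive probability to a symmetric collection of six (or fewer) elements whose projections span a 3-dimensional subspace and sum to zero with weights summing to $1$, which is all that Theorem~\ref{theorem:transience_connected_3} requires.
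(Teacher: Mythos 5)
Your proposal is correct and follows essentially the same route as the paper: both exhibit a transient $a$ satisfying the mean drift condition with $\dim(\operatorname{span}(T_a))\geq 3$ by distributing mass evenly over $\pm s_i,\pm s_j,\pm s_k$ for three linearly independent projections, then invoke Theorem~\ref{theorem:transience_connected_3}. (The edge cases you flag are in fact vacuous, since linear independence of $\pi(s_i),\pi(s_j),\pi(s_k)$ already forces the six chosen elements to be distinct and nonzero under $\pi$.)
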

\begin{proof}
    Similar to the proof of \cref{corollary:recurrence_convex_symmetric}, our assumptions imply the existence of an $a\in A$ with $\dim(\operatorname{span}(T_a))=3$ that satisfies the mean drift condition.
    Thus, by \cref{proposition:recurrence_classification}, $a$ is transient, so by \cref{theorem:transience_connected_3}, $R^c$ is path-connected.
\end{proof}
In the case $G=\Z^2$ and $S=\{e_1,-e_1,e_2,-e_2\}$ where $e_i$ denotes the $i$th elementary basis vector, applying \cref{proposition:recurrence_classification}, we can see that $R$ is characterized by the mean drift condition, namely $a_1=a_2$ and $a_3=a_4$, which, after adding in the equation $a_1+a_2+a_3+a_4=1$ that $A$ satisfies, defines a line going through (the interior of) the tetrahedron $A$.
As all points satisfying the mean drift condition are recurrent, \cref{theorem:transience_connected_3} does not apply, but nonetheless $R^c$ is path-connected, though is not simply-connected. 
This suggests the existence of an interesting regime where $R^c$ is path-connected but not simply-connected, which leads us to pose the following open question which we believe has a significantly different answer from that of \cref{question:transience_connected}.
\begin{question}\label{question:transience_simply_connected}
    When is $R^c$ simply-connected?
\end{question}
In general, one should not expect $R^c$ to be convex; we confirm this in the symmetric case.
\begin{proposition}\label{proposition:transience_not_convex_symmetric}
    Using the notation of \cref{proposition:recurrence_classification}, if $S$ is symmetric, then $R^c$ is not convex, except for the case when $\pi(s)=0$ for all $s\in S$, in which case $R^c = \emptyset$ is trivially convex.
\end{proposition}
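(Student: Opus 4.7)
The plan is to give a direct two-point counterexample to convexity, leveraging the symmetry of $S$ and the characterization of recurrence in \cref{proposition:recurrence_classification}. First, I would dispose of the trivial case: if $\pi(s)=0$ for every $s\in S$, then for every $a\in A$ we have $T_a\subseteq\ker\pi$, so $\operatorname{span}(T_a)=\{0\}$ has dimension $0$ and the mean drift condition $\sum_i a_i\pi(s_i)=0$ holds automatically, hence $R=A$ and $R^c=\emptyset$ is vacuously convex.

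So assume there exists some $s_i\in S$ with $\pi(s_i)\neq 0$. By symmetry of $S$, there is some $s_j\in S$ with $s_j=-s_i$ (so $\pi(s_j)=-\pi(s_i)\neq 0$). The two key points in $A$ are the Dirac masses $a=e_i$ and $b=e_j$, which put probability $1$ on $s_i$ and $s_j$, respectively. For these, the mean drifts are $\pi(s_i)$ and $\pi(s_j)=-\pi(s_i)$, both nonzero, so by \cref{proposition:recurrence_classification} neither satisfies the mean drift condition and hence $a,b\in R^c$.

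The midpoint $c=\tfrac{1}{2}(a+b)$ lies in $A$ and corresponds to the walk that picks $s_i$ or $-s_i$ each with probability $\tfrac{1}{2}$. Then $T_c=\{\pi(s_i),-\pi(s_i)\}$, so $\dim\operatorname{span}(T_c)=1\leq 2$, and the mean drift is $\tfrac{1}{2}\pi(s_i)+\tfrac{1}{2}\pi(-s_i)=0$. By \cref{proposition:recurrence_classification}, $c\in R$, so the line segment between the two points $a,b\in R^c$ leaves $R^c$ at its midpoint, proving non-convexity.

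There is no real obstacle here; the only subtle point is to remember that symmetry of $S$ is used to guarantee that the pair $(s_i,-s_i)$ is available as generators of the midpoint walk, and that the trivial case $\pi(S)=\{0\}$ must be separated out to make the phrase ``there exists $s_i\in S$ with $\pi(s_i)\neq 0$'' meaningful. Everything else is an immediate appeal to the recurrence classification already established.
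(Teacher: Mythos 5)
Your proof is correct and follows essentially the same route as the paper: separate out the trivial case $\pi(S)=\{0\}$, then take the Dirac masses $e_i$ and $e_j$ on a pair $s_i,\,s_j=-s_i$ with $\pi(s_i)\neq 0$ (both transient by the mean drift condition) and observe that their midpoint is recurrent by \cref{proposition:recurrence_classification}. No further comment is needed.
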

\begin{proof}
    If $\pi(s)=0$ for all $s\in S$, then we have $R=A$ so $R^c=\emptyset$ is trivially convex.
    Otherwise, we have $\pi(s)\neq0$ for some $s\in S$, which implies $R^c\neq\emptyset$.
    As $S$ is symmetric and $\pi(s)\neq0$ for some $s\in S$, let $s_i=s$ and $s_j=-s$.
    Then $e_i,e_j\in R^c$ are both transient, while $\frac{e_i+e_j}{2}\in R$ is recurrent by \cref{proposition:recurrence_classification}, so $R^c$ is not convex.
\end{proof}
We leave the asymmetric case as an open question.
\begin{question}\label{question:transience_convex}
    When is $R^c$ convex?
\end{question}
\begin{remark}\label{remark:positive_recurrence_characterization}
    Note that if one wishes to study the subset $P\subseteq A$ corresponding to positive recurrent homogeneous random walks on $\Gamma(G,S)$, because irreducible homogeneous random walks with zero mean drift on $\Z^d$ are transient for $d\geq3$ and null recurrent for $d\in\{1,2\}$ (as the invariant measure, which is unique up to scaling, is identically 1 and this is not summable for $d\geq1$), using the notation of \cref{proposition:recurrence_classification}, we modify the condition to instead require $\dim(\operatorname{span}(T_a))=0$, i.e., $a_i>0$ implies $\pi(s_i)=0$.
    Note that this modified dimension condition trivially implies the mean drift condition.
    In other words, $P$ is simply the probability simplex over the indices $i$ such that $\pi(s_i)=0$.
    As a result, $P$ is closed and convex.
\end{remark}
Finally, we leave various generalizations of our setup as open questions.
\begin{question}\label{question:generalized_parameter_space}
    What can be said if we remove any of the following assumptions?
    \begin{itemize}
        \item $G$ is abelian.
        \item The random walk is homogeneous.
        \item $S$ is finite.
        \item $G$ is finitely generated.
    \end{itemize}
\end{question}

\section*{Acknowledgements}
We thank Jason Liu for many helpful discussions.
We also thank an anonymous referee for many helpful comments that improved the quality of our paper.
E.M. is supported in part by Vannevar Bush Faculty Fellowship ONR-N00014-20-1-2826 and Simons Investigator award 622132.

\bibliographystyle{amsinit}
\bibliography{ref}

\end{document}